\theoremstyle{definition}
\newtheorem{lemma}{Lemma}[section]
\newtheorem{definition}[lemma]{Definition}
\newtheorem{theorem}[lemma]{Theorem}
\newtheorem{example}[lemma]{Example}
\newtheorem{proposition}[lemma]{Proposition}
\newtheorem{corollary}[lemma]{Corollary}
\newtheorem{notation}[lemma]{Notation}
\newtheorem{remark}[lemma]{Remark}
\newtheorem{construction}[lemma]{Construction}
\newtheorem{problem}[lemma]{Problem}
\title{\bf On the boundary components of central streams}
\author{Nobuhiro Higuchi}
\begin{document}

\maketitle

\begin{abstract}
Foliations on the space of $p$-divisible groups were
studied by Oort in 2004.
In his theory, special leaves called central stream
play an important role.
In this paper, we give a complete classification of the boundary components
of the central streams for an arbitrary Newton polygon in the unpolarized case.
Hopefully this classification would help us to know 
the boundaries of other leaves and more detailed structure of
the boundaries of central streams.
\end{abstract}

\section{Introduction} \label{Intro}

In \cite{oortFoliations}, Oort defined the notion of
\textit{leaves} on a family of $p$-divisible groups,
which are often called Barsotti-Tate groups,  
to study the moduli space of abelian varieties
in positive characteristic.
Let $k$ be an algebraically closed field of characteristic $p$.
Let ${\rm S}$ be a noetherian scheme over $k$.
For a $p$-divisible group $Y$ over $k$,
Oort introduced in \cite[2.1]{oortFoliations}
a locally closed subset $\mathcal C_Y({\rm S})$
for a $p$-divisible group $\mathcal Y$ over ${\rm S}$ characterized by 
$s$ belongs to $\mathcal C_Y({\rm S})$ if and only if $\mathcal Y_s$ 
is isomorphic to $Y$ over an algebraically closed field 
containing $k(s)$ and $k$;
see the first paragraph of Section~\ref{Definitions} for a review.
We call $\mathcal C_Y({\rm S})$ the central leaf associated to $Y$ and $\mathcal Y$,
if $\mathcal Y \to {\rm S}$ is a universal family 
over a deformation space or a moduli space.

In \cite[2.2]{oortFoliations} Oort showed that
$\mathcal C_Y(\rm S)$ is closed in an open Newton polygon stratum.
We consider $\mathcal C_Y(\rm S)$ as a locally closed subscheme
of $\rm S$ by giving the induced reduced scheme structure.  
We are interested in the boundaries of leaves on the deformation space,
the problem of which will be formulated from the next paragraph. 
See \cite[6.10]{oortFoliations} for a question in the polarized case
(i.e., the case that $p$-divisible groups associated with polarized abelian varieties).

Let us formulate the problem on the boundaries of central leaves.
Fix a $p$-divisible group $X_0$ over $k$.
Let $\mathfrak{Def}(X_0) = {\rm Spf} (\Gamma)$
be the deformation space of $X_0$.
The deformation space is the formal scheme pro-representing 
the functor ${\rm Art}_k \to {\rm Sets}$
which sends $R$ to the set of isomorphism classes of 
$p$-divisible groups $X$ over $R$ such that $X_k \simeq X_0$.
Here ${\rm Art}_k$ denotes the category of local Artinian rings
with residue field $k$.
Let $\mathfrak X' \to {\rm Spf}(\Gamma)$ be the universal $p$-divisible group.
In \cite[2.4.4]{JongCristalline} de Jong proved that
the category of $p$-divisible groups over ${\rm Spf}(\Gamma)$
is equivalent to the category of the $p$-divisible groups
over ${\rm Spec} (\Gamma) =: {\rm Def}(X_0)$.
Let $\mathfrak X$
be the $p$-divisible group over ${\rm Def}(X_0)$
obtained from $\mathfrak X'$ by this equivalence.
Oort studied $\mathcal C_{X_0}({\rm Def}(X_0))$, see \cite[2.7]{oortFoliations}.
We are interested in $\mathcal C_Y({\rm Def}(X))$ for $X \neq Y$
with $\mathcal Y = \mathfrak X$.
Here is a basic problem:
\begin{problem} \label{ProblemOfGeneralCase}
Let $Y$ be a $p$-divisible group over $k$.
Classify $p$-divisible groups $X$ over $k$ such that
$\mathcal C_Y({\rm Def}(X)) \neq \emptyset$.
Here $\mathcal C_Y({\rm Def}(X)) \neq \emptyset$ means that
$X$ appears as a specialization of a family of $p$-divisible groups
whose geometric fibers are isomorphic to $Y$. 
\end{problem}

In this paper we discuss the case that 
the $p$-divisible group $Y$ is ``minimal",
as the general case looks difficult.
Oort introduced the notion of minimal $p$-divisible groups
in \cite[1.1]{oortMinimal}.
Oort showed in \cite[1.2]{oortMinimal} that
the property:
{\it Let $X$ be a minimal $p$-divisible group over $k$,
and let $Y$ be a $p$-divisible group over $k$.
If $X[p] \simeq Y[p]$, then $X \simeq Y$,
where, $X[p]$ is the $p$-kernel of $p$-multiplication.}
For a Newton polygon $\xi$,
we obtain the minimal $p$-divisible group $H(\xi)$.
See the third and fourth paragraphs of Section~\ref{Definitions}
for the definitions of Newton polygons and minimal $p$-divisible groups. 

For $\mathcal C_Y({\rm Def}(X_0))$,
if $Y$ is minimal, 
then we call it a \textit{central stream}.
This notion is a ``central" tool in the theory of foliations.
For instance, it is known that the difference between
central leaves and central streams comes from 
isogenies of $p$-divisible groups.
Thus to study boundaries of general leaves, 
it is natural to start with investigating boundaries of central streams.

Let $\xi$ be a Newton polygon.
For the notation as above,
we may treat the problem:
\begin{problem} \label{ProblemOfCHxiDefXneqEmpty}
Classify $p$-divisible groups $X$ over $k$ such that 
$\mathcal C_{H(\xi)}({\rm Def}(X)) \neq \emptyset$.
\end{problem}


Let us translate this problem into the terminology 
of the Weyl group of $GL_h$.
Let $W = W_h$ be the Weyl group of $GL_h$.
We identify this $W$ with the symmetric group $\mathfrak S_h$ in the usual way.
We define $J = J_c$ by $J_c = \{s_1, \dots, s_h\} - \{s_c\}$,
where $s_i$ is the simple reflection $(i, i+1)$.
Put $d = h - c$.
Then there exists a one-to-one correspondence
between the isomorphism classes of ${\rm BT_1}$'s 
of rank $p^h$ and dimension $d$ over $k$ 
and the subset ${}^J W$ of $W$;
see Section~\ref{ClassOfBT1}.
Let $X$ be a $p$-divisible group.
Let $w \in {}^J W$.
We say $w$ is the ($p$-kernel) type of $X[p]$
if the ${\rm BT_1}$ $X[p]$ corresponds to $w$ by this bijection.

In Proposition~\ref{PropOfDefXDefY} we will show that:
Let $X$ and $Y$ be $p$-divisible groups over $k$
with $\mathcal C_{H(\xi)}({\rm Def}(X)) \neq \emptyset$
and $X[p] \simeq Y[p]$.
Then $\mathcal C_{H(\xi)}({\rm Def}(Y)) \neq \emptyset$.
Thanks to this proposition, Problem~\ref{ProblemOfCHxiDefXneqEmpty}
is reduced to
\begin{problem} \label{PrpblemOfwinJW}
Classify elements $w$ of ${}^J W$ such that
\begin{itemize}
\item[($\ast$)] there exists a $p$-divisible group $X$ over $k$ such that
$w$ is the type of $X[p]$ satisfying that 
$\mathcal C_{H(\xi)}({\rm Def}(X)) \neq \emptyset$.
\end{itemize}
\end{problem}
In this paper, we treat the following problem:
\begin{problem} \label{ProblemOfLength-1}
Classify $w \in {}^J W$ satisfying that $(\ast)$ and 
$\ell(w) = \ell(H(\xi)[p]) - 1$.
\end{problem}
A complete answer to this problem will be given by combining
Theorem~\ref{ThmOfOneToOne} and Theorem~\ref{ThmOfSpeNP} below.
In \cite{HiguchiBC}, 
we solved Problem~\ref{ProblemOfLength-1}
for Newton polygons $\xi$ consisting of two slopes satisfying
that one slope is less than 1/2 and the
other slope is greater than 1/2.
Our proof reduces the problem to the case of \cite{HiguchiBC}.

Before we state the main theorems,
we explain the above formulations in terms of 
specializations of $p$-divisible groups.
Let $X$ and $Y$ be $p$-divisible groups over $k$.
We say $X$ is a {\it specialization} of $Y$
if there exists a family of $p$-divisible group $\mathfrak X \to {\rm Spec}(R)$
with discrete valuation ring $R$ in characteristic of $p$ such that
$\mathfrak X$ is isomorphic to $Y$ over an algebraically closed field
containing $L$ and $k$,
and $\mathfrak X_k$ is isomorphic to $X$ over an algebraically closed field
containing $K$ and $k$,
where $L$ is the field of fractions of $R$, 
and $K = R/\mathfrak m$ is the residue field of $R$.
Note that $X$ is a specialization of $Y$ if and only if
$\mathcal C_Y({\rm Def}(X)) \neq \emptyset$ holds.
For a $p$-divisible group $X$,
we define the length $\ell(X[p])$ of the $p$-kernel by
the length of the element of the Weyl group which is the type of $X[p]$.
It is known that for the $p$-divisible group $X_0$,
the length $\ell(X_0[p])$ is equal to the dimension of the locally
closed subscheme of ${\rm Def}(X_0)$ obtained by
giving the induced reduced structure to the subset of ${\rm Def}(X_0)$
consisting of points $s \in {\rm Def}(X_0)$ such that
$\mathfrak X'_s[p]$ is isomorphic to $X_0[p]$
over an algebraically closed field;
see \cite[6.10]{wedhorn} and \cite[3.1.6]{moonen}.
We say a specialization $X$ of $Y$ is {\it generic} if
$\ell(X[p]) = \ell(Y[p]) - 1$ holds.


For an arbitrary Newton polygon $\xi$,
we denote by $B(\xi)$ the set
\begin{eqnarray} \label{EqOfBxi}
B(\xi) = \{\text{types of } X_s[p] \mid X_{\overline \eta} = H(\xi) \text{ and } 
\ell(X_s[p]) = \ell(X_{\overline \eta}[p]) - 1 \text{ for some } X \to {\rm S}\},
\end{eqnarray}
where
${\rm S} = {\rm Spec}(R)$ for a discrete valuation ring $(R, \mathfrak m)$,
$s = {\rm Spec}(\kappa)$ and $\overline \eta = {\rm Spec}(\overline K)$
with $\kappa = R/\mathfrak m$ and $K = {\rm frac}(R)$.
Problem~\ref{ProblemOfLength-1} asks us to
determine the set $B(\xi)$.
The first result is:
\begin{theorem} \label{ThmOfOneToOne}
Let $\xi = \sum_{i=1}^z (m_i, n_i)$ be a Newton polygon.
Let $\xi_i = (m_i, n_i) + (m_{i+1}, n_{i+1})$ be the Newton polygon
consisting of two adjacent segments for $i = 1, \dots z$.
The map 
\begin{eqnarray}
\bigsqcup_{i=1}^{z-1} B(\xi_i) \to B(\xi)
\end{eqnarray} which sends
an element $w$ of $B(\xi_i)$ to $w_{\zeta_i} \oplus w$,
where $w_{\zeta_i}$ is the type of $H(\zeta_i)[p]$ with $\zeta_i 
= (m_1, n_1) + \cdots + (m_{i-1}, n_{i-1}) + (m_{i+1}, n_{i+1}) + \cdots + (m_z, n_z)$
is bijective.
\end{theorem}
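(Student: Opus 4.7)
The plan is to establish well-definedness, injectivity, and surjectivity separately, with surjectivity being the principal task. The structural fact underlying everything is that a minimal $p$-divisible group splits along its Newton segments, $H(\xi) = H(\zeta_i) \oplus H(\xi_i)$, so that degeneration at a single adjacent pair of slopes is a natural source of elements of $B(\xi)$.

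For well-definedness, given $w \in B(\xi_i)$ realized by a family $X_i \to \mathrm{Spec}(R)$ with geometric generic fiber $H(\xi_i)$ and special type $w$, I would form the family $\mathcal{X} := H(\zeta_i)_R \oplus X_i$ whose geometric generic fiber is $H(\zeta_i) \oplus H(\xi_i) = H(\xi)$ and whose special fiber has type $w_{\zeta_i} \oplus w$. The length identity to verify is $\ell(w_{\zeta_i} \oplus w) = \ell(H(\xi)[p]) - 1$. Since the operation $\oplus$ on ${}^J W$-elements (corresponding to direct sum of BT$_1$'s) satisfies $\ell(u \oplus v) = \ell(u) + \ell(v) + c(h_1,c_1;h_2,c_2)$, where the cross term depends only on the ranks and dimensions of the two summands, subtracting this identity applied to $w_{\zeta_i} \oplus w_{\xi_i}$ (whose length is $\ell(H(\xi)[p])$) from the same identity applied to $w_{\zeta_i} \oplus w$ gives the required equation $\ell(w_{\zeta_i} \oplus w) - \ell(H(\xi)[p]) = \ell(w) - \ell(w_{\xi_i}) = -1$.

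For injectivity, from an element of the form $w_{\zeta_i} \oplus w$ in $B(\xi)$ we recover $i$ as follows. The minimal factor $w_{\zeta_i}$ is the type of $H(\zeta_i)[p]$, and the Newton polygon $\zeta_i$ is determined by $\xi$ together with the choice of the removed adjacent pair $(m_i, n_i), (m_{i+1}, n_{i+1})$, hence by $i$; once $i$ is read off, the factor $w \in B(\xi_i)$ is determined as the complementary block.

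The heart of the argument is surjectivity. Given $w \in B(\xi)$ realized by a family $\mathcal{X} \to \mathrm{Spec}(R)$ with geometric generic fiber $H(\xi)$ and special type $w$ of length $\ell(H(\xi)[p]) - 1$, I would argue that there exists an index $i$ and a decomposition $\mathcal{X} \simeq H(\zeta_i)_R \oplus \mathcal{X}_i$, where $\mathcal{X}_i$ has geometric generic fiber $H(\xi_i)$ and special type a length-one reduction of $w_{\xi_i}$. The combinatorial input is that a length-one reduction of $w_\xi := w_{\zeta_i} \oplus w_{\xi_i}$ in the set ${}^J W$, viewed through its realization as the type of the direct-sum BT$_1$ $\bigoplus_j H(m_j, n_j)[p]$, must localize within a single two-slope block $\xi_i$: the other blocks $H(m_j, n_j)[p]$ remain intact on the nose. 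I would prove this by an explicit analysis of the reduced decomposition of $w_\xi$ and the reflections that preserve membership in ${}^J W$ while lowering length by one. Having established this block decomposition for $X_s[p]$, I would lift it to a decomposition of $X_s$ itself using Oort's minimality theorem (quoted earlier in the excerpt): since $H(\zeta_i)$ is minimal and its $p$-kernel splits off from $X_s[p]$, the corresponding summand of $X_s$ must be $H(\zeta_i)$. Finally, the deformation-theoretic rigidity of the constant minimal summand allows propagating this splitting over $R$, yielding the desired $\mathcal{X}_i$; its generic fiber is forced to be $H(\xi_i)$ because the complementary summand of the generic fiber $H(\xi)$ is exactly $H(\xi_i)$.

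The main obstacle is the combinatorial localization step in the surjectivity argument: namely, showing that any length-one reduction of $w_\xi$ in ${}^J W$ compatible with the BT$_1$-type interpretation arises from a single two-slope block. This is where the paper's reduction to the two-slope case of \cite{HiguchiBC} enters, and everything else in the proof is a formal consequence via minimality and standard lifting.
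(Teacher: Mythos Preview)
Your surjectivity argument contains a genuine gap, and it is not in the place you flag. The ``combinatorial localization'' step---that a length-one drop of $w_\xi$ must occur inside a single adjacent two-slope block---is indeed the real content, and it is exactly what the paper proves (Corollary~\ref{CoroOfrr+1}, via the ABS machinery and Lemma~\ref{LemOf01} and Proposition~\ref{PropOfrq}). What fails is the step \emph{after} that: you propose to lift the splitting $X_s[p] \simeq H(\zeta_i)[p] \oplus G$ to a splitting of $X_s$ using Oort's minimality theorem, and then to propagate this over $R$ by ``deformation-theoretic rigidity.'' Neither inference is valid. Minimality says that if $Y$ is minimal and $X[p]\simeq Y[p]$ then $X\simeq Y$; it says nothing about a minimal direct summand of $X[p]$ lifting to a direct summand of $X$. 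There exist non-split slope filtrations whose $p$-kernels nonetheless split, so the implication you want is false in general. And even when $X_s$ does split, a one-parameter family with split special fibre has no reason to split globally---the relevant deformation functor is not a product.

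The paper avoids this entirely by never touching the family $\mathcal X$ in the surjectivity direction. It works purely at the level of ABS's: every generic specialization of $S$ is a full modification by a pair $(0^r_i,1^q_j)$ (Propositions~\ref{PropOfAVanish} and~\ref{PropOfBVanish}), necessarily with $q=r+1$ (Corollary~\ref{CoroOfrr+1}), and then one checks directly (the four Steps in the proof) that the very same pair $(0^r_i,1^{r+1}_j)$ yields a generic full modification of the two-slope ABS $R_r=S_r\oplus S_{r+1}$, with the extraneous summands contributing equal length corrections on both sides. This simultaneously produces the element of $B(\xi_r)$ and proves it maps to the given element of $B(\xi)$, with no appeal to splitting families. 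If you want to salvage your route, you would need to replace the family-splitting step by the observation that once the transposition is localized in block $i$, the \emph{same} transposition already exhibits $w'$ as a generic ${\rm BT}_1$-specialization of $w_{\xi_i}$, and then argue separately that such specializations are realized by $p$-divisible group families over a DVR; but at that point you are essentially redoing the paper's argument.
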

This theorem implies that the determining problem of boundaries of central streams
is reduced to the case that the Newton polygon consists of two segments.
Moreover, for the two slopes case, we will show the following result:
\begin{theorem} \label{ThmOfSpeNP}
Let $\xi = (m_1, n_1) + (m_2, n_2)$ be a Newton polygon satisfying that
$n_1/(m_1 + n_1) > n_2/(m_2 + n_2) \geq 1/2$.
Put $\xi^{\rm C} = (m_1, n_1- m_1) + (m_2, n_2 - m_2)$.
Then the map sending $w$ to $w|_{\{1, \dots, n_1+n_2\}}$
gives a bijection from $B(\xi)$ to $B(\xi^{\rm C})$.
\end{theorem}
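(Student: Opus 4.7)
The plan is to exhibit a natural correspondence between specialization data for $H(\xi)$ and for $H(\xi^{\rm C})$. The starting observation is that, when both slopes of $\xi$ lie in $[1/2,1)$, the Dieudonn\'e module $M$ of $H(\xi)$ satisfies the chain $pM \subseteq FM \subseteq M$, in which $\dim_k (FM/pM) = n_1 + n_2$ and the sub-quotient $FM/pM$ with its induced $F$ and $V$ matches the $p$-kernel of $H(\xi^{\rm C})$. First I would make this identification precise on minimal Dieudonn\'e modules, pairing basis vectors so that $w_\xi$ preserves the subset $\{1,\ldots,n_1+n_2\}$ and restricts there to $w_{\xi^{\rm C}}$; on the complementary block $\{n_1+n_2+1,\ldots,h\}$ its action is rigid, determined by the positions where $F$ acts by multiplication by $p$ on a preferred basis.

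The next, and main, step is to verify the block property for every $w \in B(\xi)$. For a one-parameter specialization $\mathcal{X} \to {\rm Spec}(R)$ of $H(\xi)$ whose closed fibre has $p$-kernel of type $w$ with $\ell(w) = \ell(w_\xi) - 1$, the filtration $pM \subseteq FM \subseteq M$ extends canonically along the family (because the condition that both slopes are $\geq 1/2$ forces $F$ to be divisible by $p$ on a uniquely determined piece); combined with the length-one-drop condition and the rigidity of the complementary block, the unique simple braid-move realizing the drop must act only within the first block. Assuming this block property, the restriction $w|_{\{1,\ldots,n_1+n_2\}}$ is a well-defined element of the appropriate Weyl group, and lies in $B(\xi^{\rm C})$ because the induced sub-family on $FM$ specializes $H(\xi^{\rm C})$ to a $p$-divisible group with $p$-kernel of type $w|_{\{1,\ldots,n_1+n_2\}}$ and length drop exactly one.

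The inverse map sends $w' \in B(\xi^{\rm C})$ to the extension of $w'$ that matches $w_\xi$ on the rigid block $\{n_1+n_2+1,\ldots,h\}$; the corresponding specialization is built by coupling the family realizing $w'$ with the unchanged rigid piece of $H(\xi)$, and mutual inversion follows directly from the block decomposition. The principal obstacle is the block property above: a priori one might find length-one specializations of $H(\xi)$ whose type differs from $w_\xi$ only on the rigid block. Ruling these out requires a geometric argument that the filtration $pM \subseteq FM$ is intrinsic to $p$-divisible groups with Newton polygon $\xi$ (because both slopes are $\geq 1/2$), so that no deformation can perturb it nontrivially; together with $\ell(w) = \ell(w_\xi) - 1$, this rigidity forces the desired block structure and thus the bijection.
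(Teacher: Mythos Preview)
Your proposal contains a genuine gap at its foundation. The claim that ``$FM/pM$ with its induced $F$ and $V$ matches the $p$-kernel of $H(\xi^{\rm C})$'' is not correct: since $V(FM)=VFM=pM$, the induced $V$ on $FM/pM$ is identically zero, so $FM/pM$ is never a ${\rm DM_1}$ (unless $m_1=m_2=0$). Thus there is no sub-quotient of $\mathbb D(H(\xi))/p$ that literally realises $\mathbb D(H(\xi^{\rm C})[p])$, and the whole mechanism of ``passing to the induced sub-family on $FM$'' collapses. The numerical coincidence $\dim_k(FM/pM)=n_1+n_2$ and the fact that $w_\xi$ really does preserve $\{1,\dots,n_1+n_2\}$ are genuine, but the link between $\xi$ and $\xi^{\rm C}$ is more delicate than a filtration step: in the paper's language (Lemma~\ref{LemOfXiAndXiC}) one deletes from $T(S)$ the images $\Pi(S)(t)$ of the elements with $\Delta(S)(t)=1$ and replaces $\Pi$ on the remaining $1$'s by $\Pi(S)^2$. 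This is not the restriction of $F,V$ to a sub-quotient. Consequently your ``block property'' and the rigidity argument built on the filtration have no footing; you would need an entirely different justification for why a length-one specialization cannot disturb the complementary block.

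The paper's proof is of a different nature and avoids Dieudonn\'e-module filtrations entirely. It works in the combinatorial ABS framework: by the results of Section~\ref{ConstOfFullModifi} every generic specialization of $S$ arises as the full modification attached to a pair $(0^1_i,1^2_j)$, and Lemma~\ref{LemOfXiAndXiC} shows that the set of such pairs is literally the same for $S$ (the ABS of $\xi$) and $R$ (the ABS of $\xi^{\rm C}$). The proof then tracks the auxiliary sets $A_n$, $B_n$ of Constructions~\ref{ConstAn} and~\ref{ConstBn} through the curtailment: using that $\Pi(R)=\Pi(S)$ on $0$'s and $\Pi(R)=\Pi(S)^2$ on $1$'s, one shows $A'_n$ agrees with $A_n$ or $A_{n+1}$ (and similarly for $B$), so the length drop is the same on both sides. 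The argument is short precisely because the heavy lifting has been done in Propositions~\ref{PropOfAVanish}--\ref{PropOfBVanish} and Corollary~\ref{CoroOfrr+1}; any attempt to prove Theorem~\ref{ThmOfSpeNP} that bypasses this machinery must supply an independent reason why generic specializations are governed by such pairs, which your proposal does not.
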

For a Newton polygon $\xi = (m_1, n_1) + (m_2, n_2)$,
we set $\xi^{\rm D} = (n_2, m_2) + (n_1, m_1)$.
By the duality, it is easy to see that
the map sending $w$ to $i \mapsto l - w(l - i)$,
with $l = m_1 + n_1 + m_2 + n_2 + 1$,
gives a bijection from $B(\xi)$ to $B(\xi^{\rm D})$.
Using repeatedly this duality and Theorem~\ref{ThmOfSpeNP},
we can reduce Problem~\ref{ProblemOfLength-1}
to the case of \cite{HiguchiBC}.
There results give a complete answer to Problem~\ref{ProblemOfLength-1}.

This paper is organized as follows.
In Section~\ref{Prelim}, we recall notions of 
$p$-divisible groups, Newton polygons
and truncated Dieudonn\'e modules of level one.
In this section, we review the classification of ${\rm BT_1}$'s.
Moreover, we introduce the definition of 
arrowed binary sequences which is the 
main tool to show the main result.
In Section~\ref{ArbitNP}, we show some properties of arrowed binary sequences
and Newton polygons.
The goal of this section is to give a proof of Theorem~\ref{ThmOfOneToOne}.
In Section~\ref{TwoSlopesNP},
we treat central streams 
corresponding to Newton polygons consisting of two slopes,
and give a proof of Theorem~\ref{ThmOfSpeNP}.

\section{Preliminaries} \label{Prelim}

In this section, first we recall the notions of $p$-divisible groups,
leaves and Dieudonn\'e modules.
In Section~\ref{ClassOfBT1},
we review the definition of truncated Barsotti-Tate groups of level one
and classification of ${\rm BT_1}$'s.
Moreover, in Section~\ref{SecABS} we introduce arrowed binary sequences
as a generalization of classifying data ${}^J W$ of ${\rm BT_1}$'s,
which are the main tool to show the main theorem.

\subsection{$p$-divisible groups and Dieudonn\'e modules} \label{Definitions}

In this section we fix a prime number $p$.
Let $h$ be a non-negative integer.
Let ${\rm S}$ be a scheme in characteristic $p$.
A \textit{$p$-divisible group} (Barsotti-Tate group) of height $h$
over ${\rm S}$ is an inductive system $(G_v, i_v)_{v \geq 1}$,
where $G_v$ is a finite locally free commutative group scheme over ${\rm S}$
of order $p^{vh}$, 
and for every $v$, there exists the exact sequence of commutative group schemes
$$0 \longrightarrow G_v \overset{i_v}\longrightarrow G_{v+1}
\overset{p^v}\longrightarrow G_{v+1},$$
with canonical inclusion $i_v$.
Let $X = (G_v, i_v)$ be a $p$-divisible group over ${\rm S}$.
Let $T$ be a scheme over ${\rm S}$.
Then we have the $p$-divisible group $X_T$ over $T$,
which is defined as $(G_v \times_{\rm S} T,\ i_v \times {\rm id})$.
For the case $T$ is a closed point $s$ over ${\rm S}$,
we call the $p$-divisible group $X_s$ fiber of $X$ over $s$.
Let $k$ be an algebraically closed field of characteristic $p$.
Let $Y \to {\rm Spec}(k)$ a $p$-divisible group,
and let $\mathcal Y \to {\rm S}$ be a $p$-divisible group.
In \cite[2.1]{oortFoliations} Oort defined a \textit{leaf} by

\begin{equation} \label{EqOfLeaf}
\mathcal C_Y({\rm S}) = 
\{s \in {\rm S} \mid \mathcal Y_s \text{ is isomorphic to } Y
\text{ over an algebraically closed field}\},
\end{equation}
as a set,
and showed that $\mathcal C_Y(\rm S)$ is closed
in a Newton stratum (cf. \cite[2.2]{oortFoliations}).
We regard $\mathcal C_Y({\rm S})$ as a locally closed subscheme of ${\rm S}$
by giving the induced reduced structure on it.

Let $K$ be a perfect field of characteristic $p$.
Let $W(K)$ denote the ring of Witt-vectors with coefficients in $K$.
Let $\sigma$ be the Frobenius over $K$.
We denote by the same symbol $\sigma$ the Frobenius over $W(K)$
if no confusion can occur.
A {\it Dieudonn\'e module over $K$} is a finite $W(K)$-module $M$
equipped with $\sigma$-linear homomorphism ${\rm F} : M \to M$
and $\sigma^{-1}$-linear homomorphism ${\rm V} : M \to M$
satisfying that ${\rm F} \circ {\rm V}$ and ${\rm V} \circ {\rm F}$ 
equal the multiplication by $p$.
For each $p$-divisible group $X$, we have the Dieudonn\'e module 
$\mathbb D(X)$ using the covariant Dieudonn\'e functor.
The covariant Dieudonn\'e theory says that
the functor $\mathbb D$ induces a canonical categorical equivalence between
the category of $p$-divisible groups over $K$ and
that of Dieudonn\'e modules over $K$ which are free as $W(K)$-modules.
In particular, there exists a categorical equivalence from
the category of finite commutative group schemes over $K$ to
that of Dieudonn\'e modules over $K$ which are of finite length.

Let $\{(m_i, n_i)\}_i$ be finite number of pairs of coprime non-negative integers 
satisfying that $\lambda_i \geq \lambda_j$ for $i < j$,
where $\lambda_i = n_i/(m_i + n_i)$ for each $i$.
A {\it Newton polygon} is a 
lower convex polygon in $\mathbb R^2$,
which breaks on integral coordinates and 
consists of slopes $\lambda_i$.
We write
\begin{equation} \label{EqOfNP}
\sum_i (m_i, n_i)
\end{equation}
for the Newton polygon.
We call each coprime pair $(m_i, n_i)$ {\it segment}.
For a Newton polygon $\xi = \sum_i (m_i, n_i)$,
we define the $p$-divisible group $H(\xi)$ by
\begin{equation} \label{EqOfMinpdiv}
H(\xi) = \bigoplus_i H_{m_i, n_i},
\end{equation}
where $H_{m, n}$ is the $p$-divisible group over $\mathbb F_p$
which is of dimension $n$, and its Serre-dual is of dimension $m$.
Moreover the Dieudonn\'e module $\mathbb D(H_{m, n})$ satisfies that
\begin{equation} \label{EqOfDieudonne}
\mathbb D(H_{m, n}) = \bigoplus_{i=1}^{m+n} W(\mathbb F_p) e_i,
\end{equation}
with $W(\mathbb F_p)$ is the ring of Witt-vectors over $\mathbb F_p$,
and $e_i$ is a basis.
In this case $W(\mathbb F_p)$ is equal to the ring of $p$-adic integers $\mathbb Z_p$.
For the basis $e_i$, operations ${\rm F}$ and ${\rm V}$ satisfy that
${\rm F}e_i = e_{i - m}$, ${\rm V}e_i = e_{i - n}$ and $e_{i - (m+n)} = pe_i$.

We say a $p$-divisible group $X$ is {\it minimal} if
$X$ is isomorphic to $H(\xi)$ over an algebraically closed field 
for a Newton polygon $\xi$.
For a $p$-divisible group $X$, the $p$-kernel $X[p]$ is obtained by
the kernel of the multiplication by $p$.
It is known that the Dieudonn\'e module of $H(\xi)[p]$ makes 
a truncated Dieudonn\'e module of level one
(abbreviated as ${\rm DM_1}$) $\mathbb D(H(\xi)[p])$.
A ${\rm DM_1}$
over $K$ of height $h$ 
is the triple $(N, {\rm F}, {\rm V})$
consisting of a $K$-vector space $N$ of height $h$,
a $\sigma$-linear map and a $\sigma^{-1}$-linear map
from $N$ to itself satisfying that
${\rm ker}\, {\rm F} = {\rm im}\, {\rm V}$ and 
${\rm im}\, {\rm F} = {\rm ker}\, {\rm V}$.

Let $\xi = \sum (m_i, n_i)$ be a Newton polygon.
We denote by $N_\xi$ the ${\rm DM_1}$ associated to the 
$p$-kernel of $H(\xi)$.
Then $N_\xi$ is described as
\begin{equation}
N_\xi = \bigoplus N_{m_i, n_i},
\end{equation}
where $N_{m, n}$ is the ${\rm DM_1}$ corresponding to the $p$-kernel of $H_{m, n}$.

We use the same notation as Section~\ref{Intro}.
The following proposition would be well-known to the specialists,
but as any good reference cannot be found,
we have give a proof for the reader's convenience.
A proof in the polarized case is given in \cite[12.5]{oortstr}.
\begin{proposition} \label{PropOfDefXDefY}
Let $\xi$ be a Newton polygon.
Let $X$ and $X'$ be $p$-divisible groups over an algebraically closed field
of characteristic $p$.
If $\mathcal C_Y({\rm Def}(X)) \neq \emptyset$
and $X[p] \simeq X'[p]$, 
then $\mathcal C_Y({\rm Def}(X')) \neq \emptyset$.
\end{proposition}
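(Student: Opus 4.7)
By the characterization given in the paragraph after~\eqref{EqOfLeaf}, the hypothesis $\mathcal C_Y({\rm Def}(X)) \neq \emptyset$ provides a complete discrete valuation ring $R$ of characteristic $p$ and a $p$-divisible group $\mathcal Z \to {\rm Spec}(R)$ with $\mathcal Z_{\overline\eta} \simeq Y$ and $\mathcal Z_s \simeq X$ over appropriate algebraic closures. The goal is to produce an analogous family with $X'$ in place of $X$.

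The strategy is to exploit the common $p$-kernel: fix an isomorphism $\varphi \colon X[p] \xrightarrow{\sim} X'[p]$ and set $G := X[p]$. View the $p$-kernel $\mathcal Z[p] \to {\rm Spec}(R)$ as a ${\rm BT_1}$-deformation of $G$ over $R$. Using Dieudonn\'e/display-theoretic deformation theory, I would construct a new $p$-divisible group $\mathcal Z' \to {\rm Spec}(R')$ over a suitable extension $R'$ of $R$ whose $p$-kernel, identified via $\varphi$, coincides with that of $\mathcal Z$ and whose special fibre is $X'$. The key input is that any ${\rm BT_1}$-deformation of $G$ admits $p$-divisible-group lifts, and that among such lifts the closed fibre can be prescribed essentially freely --- reflecting the failure (outside the minimal case) of Oort's principle that the $p$-kernel determines the $p$-divisible group.

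The main obstacle is verifying that $\mathcal Z'$ still has generic fibre geometrically isomorphic to $Y$. The Dieudonn\'e crystals of $\mathcal Z$ and $\mathcal Z'$ agree modulo $p$ (both reduce to the crystal of the common ${\rm BT_1}$), but may differ in their $p$-adic lifts of ${\rm F}$ and ${\rm V}$, potentially altering the generic fibre. To handle this, one can either argue by a direct Dieudonn\'e-theoretic comparison showing that the residual ambiguity is absorbed upon passing to the geometric generic fibre, or reformulate the claim globally: show that the closure $\overline{\mathcal C_Y}$ in an ambient moduli space of $p$-divisible groups is saturated with respect to the Ekedahl--Oort stratification, so that $X \in \overline{\mathcal C_Y}$ forces the entire EO stratum of $G$, in particular $X'$, into $\overline{\mathcal C_Y}$. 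The latter approach may invoke Oort's almost product structure for Newton polygon strata to make the EO stratification transverse to the central leaf in a controlled way.
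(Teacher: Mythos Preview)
Your proposal is a sketch of two possible strategies rather than a proof; you correctly identify the main obstacle---why the constructed family should still have generic fibre $Y$---but neither of your suggestions resolves it. The second approach (EO-saturation of $\overline{\mathcal C_Y}$) is essentially a restatement of what is to be proved and would require its own argument; the first (direct Dieudonn\'e/display comparison) is precisely what the paper does, but there is a crucial ingredient you have not spotted.

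The paper works directly with the universal display rather than passing to a DVR. Writing the display of $X$ as $\left(\begin{smallmatrix} A & B \\ C & D\end{smallmatrix}\right)$, the universal deformation $\mathfrak X$ over ${\rm Def}(X)$ has display $\left(\begin{smallmatrix} 1 & T \\ 0 & 1\end{smallmatrix}\right)\left(\begin{smallmatrix} A & B \\ C & D\end{smallmatrix}\right)$ with $\bar T$ the matrix of parameters. One then defines a deformation $\mathcal Y$ of $X'$ by the same recipe applied to the display $\left(\begin{smallmatrix} a & b \\ c & d\end{smallmatrix}\right)$ of $X'$, using the \emph{same} $T$. Because $X[p]\simeq X'[p]$, the mod-$p$ displays coincide, so $\mathcal Y[p]\simeq \mathfrak X[p]$ as ${\rm BT_1}$'s over the deformation ring; in particular, at any point $s$ with $\mathfrak X_s\simeq Y$ one obtains $\mathcal Y_s[p]\simeq Y[p]$.

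Here is what closes the gap: the proposition is intended for $Y=H(\xi)$ \emph{minimal} (note the otherwise idle opening ``Let $\xi$ be a Newton polygon'', and the formulation in the Introduction with $\mathcal C_{H(\xi)}$). By Oort's theorem on minimal $p$-divisible groups, $\mathcal Y_s[p]\simeq H(\xi)[p]$ already forces $\mathcal Y_s\simeq H(\xi)$. That is the mechanism by which, in your words, ``the residual ambiguity is absorbed''---not a general principle about geometric generic fibres or an almost-product structure, but the very special fact that a minimal $p$-divisible group is determined by its $p$-kernel. Your worry is entirely justified for arbitrary $Y$; the point is that the intended $Y$ is minimal, and without using this your outline cannot be completed.
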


\begin{proof}
Let  $h$ and $c$ be positive integers such that
$X[p]$ is the type of $w \in {}^J W$ with $W = W_h$ and $J = J_c$.
Put $d = h - c$.
Let ${\rm F}$ (resp. ${\rm V}$) denote the $\sigma$-linear map
(resp. $\sigma^{-1}$-linear map) of 
the ${\rm DM_1}$ $\mathbb D(X[p]) = \mathbb D(X)/p\mathbb D(X)$
with $\sigma$ the Frobenius.
We take a basis $\bar{z}_{d+1}, \dots, \bar{z}_h$ of 
the image of ${\rm V}$,
and choose $\bar{z}_1, \dots, \bar{z}_d \in \mathbb D(X[p])$ to be
$\bar{z}_1, \dots, \bar{z}_h$ is a basis of $\mathbb D(X[p])$.
Let $z_1, \dots, z_h$ denote the lift of $\bar{z}_1, \dots, \bar{z}_h$ to $\mathbb D(X)$.
Then $\{z_1, \dots, z_h\}$ is a basis of $\mathbb D(X)$.
We write
$$
\begin{pmatrix}
A & B \\
C & D \\
\end{pmatrix}
$$
for the display of $X$ with respect to the basis $z_1, \dots, z_h$,
where $A$ is the $d \times d$ matrix, and
$D$ is the $(h-d) \times (h-d)$ matrix.
See \cite{NormanOortModuliOfAV} for the construction of the display.
Then for the Dieudonn\'e module $\mathbb D(X)$ of $X$
equipped with the operations ${\rm F}$ and ${\rm V}$,
we have
$$({\rm F} z_1, \dots, {\rm F} z_h) = (z_1, \dots, z_h) 
\begin{pmatrix}
A & pB \\
C & pD \\
\end{pmatrix}$$
and
$$({\rm V} z_1, \dots, {\rm V} z_h) = (z_1, \dots, z_h) 
\begin{pmatrix}
p\alpha & p\beta \\
\gamma & \delta \\
\end{pmatrix}^{\sigma^{-1}},$$
where
$$
\begin{pmatrix}
\alpha & \beta \\
\gamma & \delta \\
\end{pmatrix}
$$
is the inverse matrix of the display of $X$.
The operations ${\rm F}$ and ${\rm V}$ on 
$\mathbb D(X[p])$ 
satisfy that
$$({\rm F} \bar{z}_1, \dots, {\rm F} \bar{z}_h) = (\bar{z}_1, \dots, \bar{z}_h) 
\begin{pmatrix}
\bar{A} & 0 \\
\bar{C} & 0 \\
\end{pmatrix}$$
and
$$({\rm V} \bar{z}_1, \dots, {\rm V} \bar{z}_h) = (\bar{z}_1, \dots, \bar{z}_h) 
\begin{pmatrix}
0 & 0 \\
\bar\gamma & \bar\delta \\
\end{pmatrix}^{\sigma^{-1}}.$$
For the $p$-divisible group $\mathfrak X \to {\rm Spec}(R)$ 
corresponding to the universal $p$-divisible group over ${\rm Spf}(R)$,
the display of $\mathfrak X$ induces that
$$({\rm F} \bar{z}_1, \dots, {\rm F} \bar{z}_h) = (\bar{z}_1, \dots, \bar{z}_h) 
\begin{pmatrix}
\bar{A} + \bar{T} \bar{C} & 0 \\
\bar{C} & 0 \\
\end{pmatrix}$$
and
$$({\rm V} \bar{z}_1, \dots, {\rm V} \bar{z}_h) = (\bar{z}_1, \dots, \bar{z}_h) 
\begin{pmatrix}
0 & 0 \\
\bar\gamma & -\bar\gamma \bar{T} + \bar\delta \\
\end{pmatrix}^{\sigma^{-1}},$$
where $\bar{T}$ is an $(h-n) \times n$ matrix on $R$.
On the other hand, let 
$$
\begin{pmatrix}
a & b \\
c & d \\
\end{pmatrix}
$$
denote the display of $X'$.
By the isomorphism from $X[p]$ to $X'[p]$,
we have the basis $\bar{e}_1, \dots, \bar{e}_h$ of 
$\mathbb D(X'[p]) = \mathbb D(X')/p \mathbb D(X')$.
We have then
$$({\rm F} \bar{z}_1, \dots, {\rm F} \bar{z}_h) = (\bar{z}_1, \dots, \bar{z}_h) 
\begin{pmatrix}
\bar{a} & 0 \\
\bar{c} & 0 \\
\end{pmatrix}$$
and
$$({\rm V} \bar{z}_1, \dots, {\rm V} \bar{z}_h) = (\bar{z}_1, \dots, \bar{z}_h) 
\begin{pmatrix}
0 & 0 \\
\bar{\gamma'} & \bar{\delta'} \\
\end{pmatrix}^{\sigma^{-1}}$$
for the inverse matrix
$$
\begin{pmatrix}
\alpha' & \beta' \\
\gamma' & \delta' \\
\end{pmatrix}
$$
of the display of $X'$.
Let $\mathcal Y$ be the $p$-divisible group having
$$
\begin{pmatrix}
1 & T \\
0 & 1 \\
\end{pmatrix}
\begin{pmatrix}
a & b \\
c & d \\
\end{pmatrix}
$$
as its display, where $T$ is a matrix with $T \bmod p = \bar{T}$.
Then for the display of $\mathcal Y[p]$, we see
$$({\rm F} \bar{z}_1, \dots, {\rm F} \bar{z}_h) = (\bar{z}_1, \dots, \bar{z}_h) 
\begin{pmatrix}
\bar{a} + \bar{T} \bar{c} & 0 \\
\bar{c} & 0 \\
\end{pmatrix}$$
and
$$({\rm V} \bar{z}_1, \dots, {\rm V} \bar{z}_h) = (\bar{z}_1, \dots, \bar{z}_h) 
\begin{pmatrix}
0 & 0 \\
\bar{\gamma'} & -\bar{\gamma'} \bar{T} + \bar{\delta'} \\
\end{pmatrix}^{\sigma^{-1}},$$
whence $\mathcal Y$ belongs to $\mathcal C_Y({\rm Def}(X'))$.
\end{proof}

\subsection{Classification of ${\rm BT_1}$'s} \label{ClassOfBT1}

In this section, we work over an algebraically closed field $k$.
Let us review the classification of 
truncated Barsotti-Tate groups of level one.
\begin{definition} \label{DefOfBT1}
A {\it truncated Barsotti-Tate group of level one} 
(${\rm BT_1}$) is a commutative,
finite and flat group scheme $N$ over a scheme in characteristic $p$
satisfying properties $[p]_N = 0$,
and 
\begin{eqnarray}
{\rm im}\, ({\rm V}: N^{(p)} \to N) &=& {\rm ker}\, ({\rm F} : N \to N^{(p)}),\\
{\rm im}\, ({\rm F} : N \to N^{(p)}) &=& {\rm ker}\, ({\rm V}: N^{(p)} \to N).
\end{eqnarray}
\end{definition}
A ${\rm DM_1}$ appears as a Dieudonn\'e module of
a ${\rm BT_1}$.
Let $W = W_h$ be the Weyl group of the general linear group $GL_h$.
This $W$ can be identified with the symmetric group $\mathfrak S_h$.
Let $\Omega$ be the standard generator of $W = \mathfrak S_h$.
We denote by $s_i$ the simple reflection $(i, i+1)$.
We define $J = J_c$ by $J_c = \Omega - \{s_c\}$.
Put $d = h - c$.
For the set $W_J := W_c \times W_d$,
let ${}^J W$ be the set consisting of elements $w$ of $W_h$
such that $w$ is the shortest element of ${}^J W \cdot w$;
see \cite[Chap. IV, Ex. \S 1 (3)]{BourbakiLieGps}.
Then we have 
\begin{theorem} \label{ThmOfJWBT1}
There exists a one-to-one correspondence
\begin{eqnarray}
{}^J W \longleftrightarrow 
\{{\rm BT_1}\text{'s} \text{ over } k \text{ of height } h \text{ of dimension } d\} / \cong.
\end{eqnarray}
Moreover, running over all $d$, we have
\begin{eqnarray}
\bigsqcup_d {}^J W \longleftrightarrow \{0, 1\}^h.
\end{eqnarray}
\end{theorem}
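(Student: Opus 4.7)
My plan is to establish the bijection via Oort's canonical filtration (the \emph{final type}). Let $N$ be a ${\rm BT_1}$ over $k$ of height $h$ and dimension $d$, with associated ${\rm DM_1}$ $(M, {\rm F}, {\rm V})$. First I would form the smallest family of $k$-subspaces of $M$ that contains $0$ and $M$ and is stable under both the operations $U \mapsto {\rm V}(U)$ and $U \mapsto {\rm F}^{-1}(U)$. The defining relations ${\rm ker}\,{\rm F} = {\rm im}\,{\rm V}$ and ${\rm ker}\,{\rm V} = {\rm im}\,{\rm F}$ force, by a standard dimension count, the consecutive inclusions in this family to have codimension one, producing a complete flag $0 = M_0 \subset M_1 \subset \cdots \subset M_h = M$.

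From this flag I would read off a \emph{type function} $\tau \in \{0, 1\}^h$ by setting $\tau(i) = 1$ if $M_i$ arose as ${\rm V}(M_j)$ for some $j$, and $\tau(i) = 0$ if $M_i = {\rm F}^{-1}(M_j)$ for some $j$; well-definedness follows from tracking dimension jumps. The number of $1$'s equals $\dim\,{\rm im}\,{\rm V} = c = h - d$, so the type partitions $\{0,1\}^h$ according to $d$. Fixing a basis $e_1, \dots, e_h$ adapted to the flag ($e_i \in M_i \setminus M_{i-1}$), the combinatorial data recording which basis vector maps to which under ${\rm F}$ and ${\rm V}$ yields a permutation $w \in \mathfrak{S}_h$; by construction $w$ is forced to be increasing on the $V$-index set $\{1,\dots,c\}$ and on the $F$-index set $\{c+1,\dots,h\}$ separately, which is precisely the condition that $w$ be the shortest element of $W_J \cdot w$, i.e., $w \in {}^J W$.

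For the inverse, given $w \in {}^J W$ I would prescribe an explicit ${\rm DM_1}$ on $M = \bigoplus_{i=1}^h k e_i$ by letting ${\rm V}$ and ${\rm F}$ send basis vectors to basis vectors according to $w$, with the action supported on the corresponding $V$- or $F$-block and zero elsewhere; the axioms of a ${\rm DM_1}$ are immediate from the ${}^J W$-condition. The second bijection then follows from the classical identification ${}^J W \leftrightarrow \binom{\{1,\dots,h\}}{c}$ via $w \mapsto w(\{1,\dots,c\})$; summing over all $c$ (equivalently all $d$) gives $\bigsqcup_d {}^J W \leftrightarrow \{0,1\}^h$, matching the type $\tau$ attached to the corresponding ${\rm BT_1}$.

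The principal technical obstacle is showing that $\tau$ (equivalently $w$) is a \emph{complete} isomorphism invariant: any two ${\rm BT_1}$'s with the same type are isomorphic. I would argue by induction on $h$, normalizing $e_1, \dots, e_h$ one vector at a time. At stage $i$, the vector $e_i$ is determined only modulo $M_{i-1}$ and up to scalar, and one must show that this residual freedom, when propagated through the flag stabilizer inside ${\rm Aut}(M)$, is exactly sufficient to rescale both ${\rm F}(e_i)$ and ${\rm V}(e_i)$ simultaneously to the standard forms $e_{w(i)}$ prescribed by $w$. Analyzing the orbits of the flag stabilizer on bases adapted to the flag and checking transitivity on the set of normalized bases is the delicate combinatorial heart of the argument.
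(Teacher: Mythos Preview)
Your proposal is a genuine attempt at a proof, but the paper itself does not prove this theorem: it simply \emph{cites} it. Immediately after stating Theorem~\ref{ThmOfJWBT1}, the paper records that the bijection $\{0,1\}^h \leftrightarrow \{{\rm DM_1}\text{'s}\}/\!\cong$ is due to Kraft, Oort, and Moonen--Wedhorn, and then only spells out the two combinatorial dictionaries: from $\nu \in \{0,1\}^h$ to an explicit ${\rm DM_1}$ $D(\nu)$ via formulas \eqref{DefOfF}--\eqref{DefOfV}, and from $w \in {}^J W$ to $\nu$ via $\nu(j)=0 \Leftrightarrow w(j)>c$. The hard direction---that the type is a \emph{complete} invariant---is entirely outsourced to the references.

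Your approach via the canonical filtration is in fact Oort's original argument that the paper is citing, so you are not taking a different route; you are reconstructing the cited proof. One small caution on conventions: the paper's ${}^J W$ consists of shortest elements of \emph{left} cosets $W_J \cdot w$, so the monotonicity condition is on $w^{-1}$ restricted to $\{1,\dots,c\}$ and $\{c+1,\dots,h\}$, not on $w$ itself; correspondingly the bijection with $c$-subsets is $w \mapsto w^{-1}(\{1,\dots,c\})$ (equivalently, the positions $j$ with $w(j)\le c$), which matches the paper's rule $\nu(j)=1 \Leftrightarrow w(j)\le c$. Your description of ``$w$ increasing on the $V$-index set'' and the map $w \mapsto w(\{1,\dots,c\})$ is the inverse convention; make sure whichever you pick is consistent with how you extract $w$ from the flag. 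Beyond this bookkeeping, your outline is sound, and the step you flag as the ``principal technical obstacle'' is indeed exactly where the substance lies in the cited references.
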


Kraft \cite{kraftKom}, Oort \cite{oortstr} 
and Moonen-Wedhorn \cite{moonen-wedhornDis} show that
there exists a one-to-one correspondence:
\begin{eqnarray}
\{0, 1\}^h \longleftrightarrow \{{\rm DM_1}\text{'s} \text{ over } k 
\text{ of height } h \} / \cong.
\end{eqnarray}
For $\nu \in \{0, 1\}^h$,
we construct the ${\rm DM_1}$ $D(\nu)$ as follows.
We write $\nu(i)$ for the $i$-th coordinate of $\nu$.
Put $N = ke_1\oplus \cdots \oplus ke_h$.
We define maps ${\rm F}$ and ${\rm V}$ as follows:
\begin{eqnarray} \label{DefOfF}
{\rm F}e_i =
\begin{cases}
e_j,\ j = \#\{ l \mid \nu(l) = 0,\ l \leq i\} & {\rm for}\ \nu(i) = 0,\\
0 & {\rm otherwise}.
\end{cases}
\end{eqnarray}
Let $j_1, \dots, j_c$,
with $j_1 < \dots < j_c$, be
the natural numbers satisfying $\nu(j_l) = 1$.
Put $d = h - c$.
Then a map ${\rm V}$ is defined by
\begin{eqnarray} \label{DefOfV}
{\rm V}e_i =
\begin{cases}
e_{j_l},\ l = i - d & {\rm for}\ i > d,\\
0 & {\rm otherwise}.
\end{cases}
\end{eqnarray}
Therefore $D(\nu)$ is given by $D(\nu) = (N, {\rm F}, {\rm V})$.
Thus we can identify ${\rm DM_1}$'s with sequences consisting of $0$ and $1$.

For $w \in {}^J W$,
we define $\nu(j) = 0$ if and only if $w(j) > c$ for $j = 1, \dots, h$,
and we obtain the element $(\nu(1), \nu(2), \dots, \nu(h))$
of $\{0, 1\}^h$.
This gives a one-to-one correspondence between
${}^J W$ and the subset of $\{0, 1\}^h$ consisting of elements $\nu$
satisfying $\# \{j \mid \nu(j) = 0\} = d$.
Thus we obtain a bijection between ${}^J W$ and the set of isomorphism class
of ${\rm DM_1}$'s over $k$ of height $h$ and dimension $d$.

We say that $w'$ is a specialization of $w$,
denoted by $w' \subset w$, if 
there exists a discrete valuation ring $R$ of characteristic $p$ such that
there exists a finite flat commutative group scheme $G$ over $R$
satisfying that $G_{\overline \kappa}$ is a ${\rm BT_1}$ of the type $w'$,
and $G_{\overline L}$ is a ${\rm BT_1}$ of the type $w$,
where $L$ (resp. $\kappa$) is the fractional field of $R$ 
(resp. is the residue field of $R$). 
A {\it generic specialization} $w'$ of $w$ is a specialization of $w$
satisfying $\ell(w') = \ell(w) - 1$.

Here, we show a lemma used for the construction of generic specializations.
We define $x \in W$ by $x(i) = i + d$ if $i \leq c$ and $x(i) = i - c$ otherwise.
Let $\theta$ be the map from $W$ to itself defined by
$\theta(u) = xux^{-1}$.
By \cite[4.10]{VWEOShimura},
we have $w' \subset w$ if and only if there exists $u \in W_J$
such that $u^{-1} w' \theta(u) \leq w$
with the Bruhat order $\leq$.
Let us recall \cite[Lemma~2.7]{HiguchiBC}.

\begin{lemma} \label{LemOfw'w}
Let $w \in {}^J W$.
Let $w'$ be a specialization of $w$.
If $w'$ is generic,
then there exist $v \in W$ and $u \in W_J$ such that
\begin{enumerate}
\item[(i)] $v = ws$ for a transposition $s$,
\item[(ii)] $\ell(v) = \ell(w) - 1$,
\item[(iii)] $w' = uv\theta(u^{-1})$.
\end{enumerate}
\end{lemma}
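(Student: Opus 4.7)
The plan is to build on the result \cite[4.10]{VWEOShimura} recalled just before the lemma: $w' \subset w$ if and only if there exists $u \in W_J$ with $u^{-1}w'\theta(u) \leq w$ in the Bruhat order. I would fix such a $u$ and set $v := u^{-1}w'\theta(u)$, so that $v \leq w$ and condition (iii), $w' = uv\theta(u^{-1})$, holds by construction. The strategy is then to show that $v$ is a Bruhat cover of $w$, namely $\ell(v) = \ell(w)-1$; once this is established, the standard covering property of the Bruhat order on the symmetric group $W_h = \mathfrak{S}_h$ forces $v = ws$ for some reflection $s$, which is a transposition, yielding (i) and (ii).

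For the lower bound on $\ell(v)$, I would use that $w' \in {}^JW$ is the minimal-length representative of the coset $W_Jw'$, so $\ell(yw') = \ell(y) + \ell(w')$ for every $y \in W_J$. Combining this with $\ell(\theta(u)) = \ell(u)$, since $\theta$ is induced by conjugation by $x$ and is therefore length-preserving, together with the triangle inequality for $\ell$, one gets
$$\ell(v) \geq \ell(u^{-1}w') - \ell(\theta(u)) = \ell(u) + \ell(w') - \ell(u) = \ell(w').$$
Since $v \leq w$ and $\ell(w') = \ell(w)-1$, this forces $\ell(v) \in \{\ell(w)-1,\ \ell(w)\}$.

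The main obstacle I anticipate is ruling out the case $\ell(v) = \ell(w)$, which would force $v = w$ and hence $w' = uw\theta(u^{-1})$. I would handle this by a parity argument: for any simple reflections $s, s'$, the length $\ell(sys')$ lies in $\{\ell(y)-2,\ \ell(y),\ \ell(y)+2\}$ and in particular has the same parity as $\ell(y)$. For $u \in W_J$ written as a product of simple reflections $s_{i_1} \cdots s_{i_k}$, each index satisfies $i_j \neq c$, so $\theta(s_{i_j}) = x s_{i_j} x^{-1}$ is again a simple reflection; inducting on $k$ therefore yields $\ell(uw\theta(u^{-1})) \equiv \ell(w) \pmod{2}$. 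But $\ell(w') = \ell(w)-1$ has the opposite parity, contradicting $w' = uw\theta(u^{-1})$. Hence $\ell(v) = \ell(w)-1$, and the covering property of Bruhat order supplies the transposition $s$ with $v = ws$, completing the verification of (i)--(iii).
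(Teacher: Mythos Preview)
The paper does not give its own proof of this lemma; it simply recalls it from \cite[Lemma~2.7]{HiguchiBC}. Your argument is correct and essentially self-contained.

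One small point of presentation: the justification ``since $\theta$ is induced by conjugation by $x$ and is therefore length-preserving'' is too quick as stated, because conjugation in a Coxeter group does not preserve length in general. The correct reason --- which you in fact state a few lines later --- is that for each $i \neq c$ the element $\theta(s_i) = x s_i x^{-1}$ is again a simple reflection (namely $s_{i+d}$ if $i<c$ and $s_{i-c}$ if $i>c$). Thus $\theta$ restricts to an isomorphism of Coxeter systems from $W_J$ onto the parabolic subgroup generated by $\Omega \setminus \{s_d\}$, and this is what yields $\ell(\theta(u)) = \ell(u)$ for $u \in W_J$. With that clarification in place, every step goes through: the lower bound $\ell(v) \geq \ell(w')$ via the minimal--coset--representative property of $w'$, the parity obstruction ruling out $v = w$, and the standard fact that a Bruhat covering relation in $\mathfrak S_h$ is realized by right multiplication by a transposition.
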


\subsection{Arrowed binary sequences} \label{SecABS}

In order to classify the types of generic specializations of $H(\xi)$,
we introduce a new notion {\it arrowed binary sequence}
which is slightly more generalized than the notion of binary sequences $\{0, 1\}^h$.
This notion of arrowed binary sequences is useful for
classifying generic specializations of minimal $p$-divisible groups.

\begin{definition} \label{DefOfABS}
An {\it arrowed binary sequence} 
(we often abbreviate as ABS)
$S$ is the triple $(T, \Delta, \Pi)$ consisting of 
an ordered symbol set $T = \{t_1 < t_2 < \cdots < t_h\}$,
a map $\Delta : T \to \{0, 1\}$ and
a bijection $\Pi : T \to T$.
For an ABS $S$, let $T(S)$ denote the ordered symbol set of $S$.
Similarly, we denote by $\Delta(S)$ (resp. $\Pi(S)$) 
the map from $T(S)$ to $\{0, 1\}$ (resp. the map from $T(S)$ to itself).
For an ABS $S$,
we define the {\it length} $\ell(S)$ of $S$ by
\begin{eqnarray}
\ell(S) = \# \{(t, t') \in T(S) \times T(S) \mid 
t < t' \text{ with } \delta(t) = 0 \text{ and } \delta(t') = 1\}.
\end{eqnarray}
\end{definition}

\begin{remark}
Let $N = (N, {\rm F}, {\rm V})$ be a ${\rm DM_1}$.
We construct the arrowed binary sequence $(\Lambda, \delta, \pi)$ 
associated to $N$ as follows.
Let $\nu$ be the element of $\{0, 1\}^h$ corresponding to $N$. 
For an totally ordered set $\Lambda = \{t_1, \dots, t_h\}$,
let $\delta:\Lambda \rightarrow \{0, 1\}$ be the map
which sends $t_i$ to the $i$-th coordinate of $\nu$.
We define a map $\pi:\Lambda \rightarrow \Lambda$ by
$\pi(t_i) = t_j$,
where $j$ is uniquely determined by
\begin{equation}
\begin{cases}
{\rm F}e_i = e_j & \text{ if } \delta(t_i) = 0,\\
{\rm V}e_j = e_i & \text{ otherwise}.
\end{cases}
\end{equation}
We say an ABS $S$ is {\it admissible}
if there exists a ${\rm DM_1}$ such that
$S$ is obtained from this ${\rm DM_1}$ as above.
\end{remark}

\begin{remark} \label{RemOfSimpleABS}
For the ${\rm DM_1}$ $N_{m, n}$ corresponding to 
the $p$-divisible group $H_{m, n}$,
we get the ABS $S$ as follows.
Set $T(S) = \{t_1, \dots, t_{m+n}\}$.
The map $\Delta(S)$ is defined by
$\Delta(S)(t_i) = 1$ if $i \leq m$,
and $\Delta(S)(t_i) = 0$ otherwise. 
The map $\Pi(S)$ is defined by
$\Pi(S)(t_i) = t_{i - m \bmod (m+n)}$.
\end{remark}

Let $S$ be an ABS.
Put $\delta = \Delta(S)$ and $\pi = \Pi(S)$.
The {\it binary expansion} $b(t)$ of $t \in T(S)$ is the real number
$b(t) = 0.b_1b_2.\dots,$ 
where $b_i = \delta(\pi^{-i}(t))$.

\begin{proposition} \label{PropOfBinExp}
Let $S$ be an admissible ABS.
For elements $t_i$ and $t_j$ of $T(S) = \{t_1, t_2, \dots, t_h\}$, 
the following holds.
\begin{itemize}
\item[(i)] Suppose $\Delta(S)(t_i) = \Delta(S)(t_j)$.
Then $t_i < t_j$ if and only if $\Pi(S)(t_i) < \Pi(S)(t_j)$.
\item[(ii)] Suppose $b(t_i) \neq b(t_j)$.
Then $b(t_i) < b(t_j)$ if and only if $i < j$.
\end{itemize}
\end{proposition}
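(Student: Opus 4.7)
The plan is to extract from admissibility the following structural description of $\Pi$: using the explicit formulas for $\mathrm{F}$ and $\mathrm{V}$ on $D(\nu)$ from (\ref{DefOfF}) and (\ref{DefOfV}), the permutation $\pi = \Pi(S)$ restricts to an order-preserving bijection
\[
\pi\colon \delta^{-1}(0) \xrightarrow{\sim} \{t_1, \dots, t_d\}, \qquad
\pi\colon \delta^{-1}(1) \xrightarrow{\sim} \{t_{d+1}, \dots, t_h\},
\]
where $d = \#\delta^{-1}(0)$. Indeed, if $\delta(t_i) = 0$ then $\pi(t_i) = t_j$ with $j = \#\{l \leq i : \nu(l) = 0\}$, which is clearly monotone in $i$ and lands in $\{1,\dots,d\}$; and if $\delta(t_i) = 1$ with $i$ the $l$-th index carrying a $1$, then $\pi(t_i) = t_{d+l}$, again monotone. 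Assertion (i) is then an immediate consequence: two elements with the same $\delta$-value lie in a common fiber on which $\pi$ is order-preserving.

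For (ii) I would argue by induction on the position $k \geq 1$ at which the binary expansions $b(t_i)$ and $b(t_j)$ first disagree. The key combinatorial identity is the shift
\[
b(\pi^{-1}(t)) = 0.b_2 b_3 b_4 \cdots,
\]
together with the observation that $b_1(t_j) = \delta(\pi^{-1}(t_j))$ equals $0$ precisely when $t_j \in \{t_1, \dots, t_d\}$, i.e.\ when $j \leq d$; this follows from the characterization of $\pi$ above.

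In the base case $k = 1$, the values of $b_1$ differ; say $b_1(t_i) = 0$ and $b_1(t_j) = 1$. Then $b(t_i) < b(t_j)$ by definition of the binary expansion, and at the same time $i \leq d < j$, so $i < j$. In the inductive step $k \geq 2$ we have $b_1(t_i) = b_1(t_j)$, so $t_i$ and $t_j$ lie in the same block among $\{t_1,\dots,t_d\}$ and $\{t_{d+1},\dots,t_h\}$. On each such block $\pi^{-1}$ is order-preserving (as the inverse of an order-preserving bijection from a $\delta$-fiber). The expansions of $\pi^{-1}(t_i)$ and $\pi^{-1}(t_j)$ first disagree at position $k-1$, so the induction hypothesis applies and relates their order in $T(S)$ to the order of their binary expansions; combining with the shift identity and the fact that $\pi^{-1}$ preserves the order on the relevant block gives the equivalence for $t_i$ and $t_j$.

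I don't expect a serious obstacle here: once the order-preservation of $\pi$ on each $\delta$-fiber is extracted from the definitions in Section~\ref{Definitions}, everything reduces to a clean induction via the shift identity. The only place to be careful is making sure the induction hypothesis applies to the pair $(\pi^{-1}(t_i), \pi^{-1}(t_j))$, which is guaranteed because the first disagreement in their expansions occurs strictly earlier, and one must check that $b(\pi^{-1}(t_i)) \neq b(\pi^{-1}(t_j))$ — but this is immediate from the disagreement at position $k-1$.
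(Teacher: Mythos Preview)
Your proof is correct and follows essentially the same approach as the paper: both extract from admissibility that $\pi$ is order-preserving on each $\delta$-fiber (the paper states this tersely as ``(i) follows from the construction''), and both locate the first position where the digit sequences of $b(t_i)$ and $b(t_j)$ disagree and propagate the order comparison back using (i). The only difference is organizational --- you package the argument as an explicit induction on the disagreement position (handling both directions of (ii) simultaneously), whereas the paper treats the two directions separately, one directly and one by contradiction; your remark that $b(\pi^{-1}(t_i)) \neq b(\pi^{-1}(t_j))$ is better justified via the identity $b(\pi^{-1}(t)) = 2b(t) - b_1$ (valid since $b_1(t_i) = b_1(t_j)$) than via ``disagreement at position $k-1$'', but the claim is true either way.
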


\begin{proof}
(i) follows from the construction of admissible ABS's.
Let us see (ii).
Put $\delta = \Delta(S)$ and $\pi = \Pi(S)$.
By the construction of admissible ABS's,
for elements $t$ and $t'$ of $T(S)$,
if $\delta(t) = 1$ and $\delta(t') = 0$,
then $\pi(t') < \pi(t)$.
First, assume $b(t_i) < b(t_j)$.
Then there exists a non-negative integer $u$ such that
$\delta(\pi^{-v}(t_i)) = \delta(\pi^{-v}(t_j))$ for $0 \leq v < u$
and $\delta(\pi^{-u}(t_i)) = 0$, $\delta(\pi^{-u}(t_j)) = 1$.
We have then $\pi^{-u+1}(t_i) < \pi^{-u+1}(t_j)$,
and the assertion follows from (i).
Next, assume $i < j$.
To lead a contradiction, we suppose that $b(t_j) < b(t_i)$.
Then there exists a non-negative integer $u$ such that
$\delta(\pi^{-u}(t_j) = 0$ and $\delta(\pi^{-u}(t_i)) = 1$,
and for non-negative integers $v$ satisfying $v < u$,
we have $\delta(\pi^{-v}(t_j) = \delta(\pi^{-v}(t_i))$.
This implies that $\pi^{-u+1}(t_j) < \pi^{-u+1}(t_i)$,
and we have a contradiction.
\end{proof}

We denote by $\mathcal H'(h, d)$ the set of admissible ABS's
whose corresponding ${\rm DM_1}$'s are of height $h$ and dimension $d$.
We shall translate the ordering $\subset$ on ${}^J W$ via
the bijection from ${}^J W$ to $\mathcal H'(h, d)$,
and we obtain an ordering on $\mathcal H'(h, d)$ as 
the notion of specializations of admissible ABS's.

We shall give a method to construct a type of the specializations of ABS's.
It will turns out to correspond to specializations $w' \subset w$ with
$v = ws < w$ and $w' = uv\theta(u^{-1})$,
where $s$ denotes a transposition and $u \in W_J$.
From $S \in \mathcal H'(h, d)$,
we construct a new admissible ABS $S'$.

\begin{definition} \label{DefOfSmallModification}
Let $S$ be an ABS with $T(S) = \{t_1 < \dots < t_h\}$.
Let $i$ and $j$ be natural numbers satisfying that
$\Delta(S)(t_i) = 0$ and $\Delta(S)(t_j) = 1$ with $i < j$.
We define an ABS $S^{(0)}$ as follows.
We set $T(S^{(0)}) = \{t'_1 <' \dots <' t'_h\}$
to be $t'_z = t_{f(z)}$ for $f = (i, j)$ transposition.
Let $\Delta(S^{(0)}) = \Delta(S)$.
For a natural number $z$ with $1 \leq z \leq h$,
we denote by $g(z)$ the natural number satisfying $\Pi(S)(t_z) = t_{g(z)}$.
We define $\Pi(S^{(0)}) : T(S^{(0)}) \to T(S^{(0)})$ by
\begin{eqnarray}
\Pi(S^{(0)})(t'_z) = 
\begin{cases}
t'_{g(j)} & \text{if } z = i, \\
t'_{g(i)} & \text{if } z = j, \\
t'_{g(z)} & \text{otherwise}.
\end{cases}
\end{eqnarray}
Thus we obtain an ABS $S^{(0)}$.
We call this ABS {\it small modification by $t_i$ and $t_j$}.
\end{definition}

\begin{definition} \label{DefOfSpecialization}
Let $S$ be an ABS.
Let $S^{(0)}$ be the small modification by $t_i$ and $t_j$.
Put $T(S^{(0)}) = \{t_1 < \dots < t_h\}$.
We define an ABS $S'$ as follows.
Let $T(S') = T(S^{(0)})$ as sets.
Let $<'$ denote an ordering of $T(S')$.
Put $\Delta(S') = \Delta(S^{(0)})$ and $\Pi(S') = \Pi(S^{(0)})$.
We say that 
the ABS $S'$ is a {\it specialization of $S$}
if for elements $t_x$ and $t_y$ of $T(S')$,
$$t_x <' t_y \Rightarrow b(t_x) \leq b(t_y).$$
We say a specialization $S'$ of $S$ is {\it generic} if
$\ell(S') = \ell(S) - 1$.
\end{definition}

Note that for the small modification $S^{(0)}$,
in general the specialization $S'$ is not unique.
However, the ${\rm DM_1}$ obtained by the pair $(T(S'), \Delta(S'))$
is unique.

\begin{remark} \label{RemOfw'w}
Let $S \in \mathcal H'(h, d)$.
Let $S'$ be the specialization of $S$ obtained by exchanging $t_i$ and $t_j$
with $T(S') = \{t'_1 <' \dots <' t'_h\}$.
We denote by $w$ the element of ${}^J W$ corresponding to $S$.
Put $s = (i, j)$ transposition.
Maps $\Pi(S)$ and $\Pi(S')$ can be regarded as elements of $W$.
We have then $\Pi(S) = xw$.
For the small modification $S^{(0)}$
with $T(S^{(0)}) = \{t^{(0)}_1 < \dots < t^{(0)}_h\}$,
we define $\varepsilon \in W$ to be
$t^{(0)}_z = t'_{\varepsilon(z)}$.
Since $b(t^{(0)}_z) < 0.1$ if $z \leq d$ and $b(t^{(0)}_z) > 0.1$ otherwise,
$\varepsilon$ stabilizes $\{1, 2, \dots, d\}$.
Put $v = ws$.
Then $w' = uv\theta(u^{-1})$ corresponds to $S'$
for $u = x^{-1} \varepsilon^{-1} x \in W_J$.
The map $\Pi(S')$ is obtained by $\varepsilon^{-1} \Pi(S) s \varepsilon$.
\end{remark}

Next, in Definition~\ref{DefOfDirectSum}, 
we introduce the direct sum of ABS's.
The construction of the direct sum is induced from the direct sum of 
corresponding ${\rm DM_1}$'s.

\begin{definition} \label{DefOfDirectSum}
Let $S_1$ and $S_2$ be ABS's.
We define the {\it direct sum} $S = S_1 \oplus S_2$ 
of $S_1$ and $S_2$ as follows.
Let $T(S) = T(S_1) \sqcup T(S_2)$ as sets.
We define the map $\Delta(S) : T(S) \to \{0, 1\}$ to be 
$\Delta(S)|_{T(S_i)} = \Delta(S_i)$ for $i = 1, 2$.
Let $\Pi(S)$ be the map from $T(S)$ to itself
satisfying that $\Pi(S)|_{T(S_i)} = \Pi(S_i)$ for $i = 1, 2$.
We define the order on $T(S)$ so that
for elements $t$ and $t'$ of $T(S)$,
\begin{itemize}
\item[(i)] if $b(t) \leq b(t')$, then $t < t'$;
\item[(ii)] $t < t'$ if and only if $\Pi(S)(t) < \Pi(S)(t')$
when $\Delta(S)(t) = \Delta(S)(t')$.
\end{itemize}
\end{definition}

\begin{notation} \label{NotOfNxi}
Let $N_\xi$ be the minimal ${\rm DM_1}$ 
of a Newton polygon $\xi = \sum_{i=1}^z (m_i, n_i)$. 
Let $S$ be the ABS associated to $N_\xi$.
Then $S$ is described as $S = \bigoplus_{i=1}^z S_i$,
where $S_i$ is the ABS associated to the ${\rm DM_1}$ $N_{m_i, n_i}$.
If an element $t$ of $T(S)$ belongs to $T(S_r)$,
then we denote by $t^r$ or $\tau^r$ this element $t$
with $\tau = \Delta(S)(t)$.
If we want to say that the element $t^r$ is the $i$-th element of $T(S_r)$,
we write $t^r_i$ for the element $t^r$.
Furthermore,
we often write $\tau^r_i$ for the element $t^r_i$ of $T(S)$ 
with $\tau = \Delta(S)(t^r_i)$.
Moreover, we describe the map $\Pi(S)$ by arrows:
$$\xymatrix{\bullet & \bullet \ar@/_15pt/[l]_{\Pi(S)} \ ,
& \bullet \ar@/_15pt/[r]_{\Pi(S)} &\bullet}.$$
\end{notation}

\begin{example} \label{ExOfSpe2735}
Let us see an example of constructing a specialization.
Let $\xi = (2, 7) + (3, 5)$,
and let $S$ be the ABS associated to $N_\xi$.
Then $S$ is described as
\\
\vspace{6mm}
$$S = \xymatrix@=1pt
{
1_{ 1 }^ 1 \ar@/_30pt/[rrrrrrrrrrrr] &
1_{ 2 }^ 1 \ar@/_30pt/[rrrrrrrrrrrr] &
0_{ 3 }^ 1 \ar@/_30pt/[ll] &
0_{ 4 }^ 1 \ar@/_30pt/[ll] &
0_{ 5 }^ 1 \ar@/_30pt/[ll] &
1_{ 1 }^ 2 \ar@/_30pt/[rrrrrrrrr] &
1_{ 2 }^ 2 \ar@/_30pt/[rrrrrrrrr] &
0_{ 6 }^ 1 \ar@/_30pt/[llll] &
0_{ 7 }^ 1 \ar@/_30pt/[llll] &
1_{ 3 }^ 2 \ar@/_30pt/[rrrrrrr] &
0_{ 4 }^ 2 \ar@/_30pt/[lllll] &
0_{ 5 }^ 2 \ar@/_30pt/[lllll] &
0_{ 8 }^ 1 \ar@/_30pt/[lllll] &
0_{ 9 }^ 1 \ar@/_30pt/[lllll] &
0_{ 6 }^ 2 \ar@/_30pt/[lllll] &
0_{ 7 }^ 2 \ar@/_30pt/[lllll] &
0_{ 8 }^ 2 \ar@/_30pt/[lllll]
}.$$
\vspace{4mm}
\\
Let $S'$ denote the specialization obtained by $0^1_4$ and $1^2_2$.
Then $S'$ is described as
\\
\vspace{6mm}
$$S' = \xymatrix@=1pt
{
1_{ 1 }^ 1 \ar@/_30pt/[rrrrrrrrrrrr] &
0_{ 3 }^ 1 \ar@/_30pt/[l] &
1_{ 2 }^ 1 \ar@/_30pt/[rrrrrrrrrrr] &
1_{ 2 }^ 2 \ar@/_30pt/[rrrrrrrrrrr] &
0_{ 5 }^ 1 \ar@/_30pt/[lll] &
1_{ 1 }^ 2 \ar@/_30pt/[rrrrrrrrrr] &
0_{ 4 }^ 1 \ar@/_30pt/[llll] &
0_{ 6 }^ 1 \ar@/_30pt/[llll] &
0_{ 7 }^ 1 \ar@/_30pt/[llll] &
0_{ 4 }^ 2 \ar@/_30pt/[llll] &
1_{ 3 }^ 2 \ar@/_30pt/[rrrrrr] &
0_{ 5 }^ 2 \ar@/_30pt/[lllll] &
0_{ 8 }^ 1 \ar@/_30pt/[lllll] &
0_{ 9 }^ 1 \ar@/_30pt/[lllll] &
0_{ 7 }^ 2 \ar@/_30pt/[lllll] &
0_{ 6 }^ 2 \ar@/_30pt/[lllll] &
0_{ 8 }^ 2 \ar@/_30pt/[lllll]
}.$$
\vspace{4mm}
\\
One can see that these $S$ and $S'$ satisfy $\ell(S') = \ell(S) - 1$,
i.e., this $S'$ is a generic specialization of $S$.
\end{example}

\begin{example} \label{ExOfSpe271235}
Next, let us treat a Newton polygon consisting of three segments.
Let $\xi = (2, 7) + (1, 2) + (3, 5)$.
Then the ABS $S$ corresponding to $N_\xi$ is
\\
\vspace{6mm}
$$S = \xymatrix@=1pt
{
1_{ 1 }^ 1 \ar@/_30pt/[rrrrrrrrrrrrrr] &
1_{ 2 }^ 1 \ar@/_30pt/[rrrrrrrrrrrrrr] &
0_{ 3 }^ 1 \ar@/_30pt/[ll] &
0_{ 4 }^ 1 \ar@/_30pt/[ll] &
0_{ 5 }^ 1 \ar@/_30pt/[ll] &
1_{ 1 }^ 2 \ar@/_30pt/[rrrrrrrrrrr] &
1_{ 1 }^ 3 \ar@/_30pt/[rrrrrrrrrrr] &
1_{ 2 }^ 3 \ar@/_30pt/[rrrrrrrrrrr] &
0_{ 6 }^ 1 \ar@/_30pt/[lllll] &
0_{ 7 }^ 1 \ar@/_30pt/[lllll] &
0_{ 2 }^ 2 \ar@/_30pt/[lllll] &
1_{ 3 }^ 3 \ar@/_30pt/[rrrrrrrr] &
0_{ 4 }^ 3 \ar@/_30pt/[llllll] &
0_{ 5 }^ 3 \ar@/_30pt/[llllll] &
0_{ 8 }^ 1 \ar@/_30pt/[llllll] &
0_{ 9 }^ 1 \ar@/_30pt/[llllll] &
0_{ 3 }^ 2 \ar@/_30pt/[llllll] &
0_{ 6 }^ 3 \ar@/_30pt/[llllll] &
0_{ 7 }^ 3 \ar@/_30pt/[llllll] &
0_{ 8 }^ 3 \ar@/_30pt/[llllll]
}.$$
\vspace{4mm}
\\
For this $S$,
the specialization $S'$ obtained by exchanging $0^1_4$ and $1^3_2$ is 
\\
\vspace{6mm}
$$S' = \xymatrix@=1pt
{
1_{ 1 }^ 1 \ar@/_30pt/[rrrrrrrrrrrrrr] &
0_{ 3 }^ 1 \ar@/_30pt/[l] &
1_{ 2 }^ 1 \ar@/_30pt/[rrrrrrrrrrrrr] &
1_{ 2 }^ 3 \ar@/_30pt/[rrrrrrrrrrrrr] &
0_{ 5 }^ 1 \ar@/_30pt/[lll] &
1_{ 1 }^ 3 \ar@/_30pt/[rrrrrrrrrrrr] &
1_{ 1 }^ 2 \ar@/_30pt/[rrrrrrrrrrrr] &
0_{ 4 }^ 1 \ar@/_30pt/[lllll] &
0_{ 6 }^ 1 \ar@/_30pt/[lllll] &
0_{ 7 }^ 1 \ar@/_30pt/[lllll] &
0_{ 4 }^ 3 \ar@/_30pt/[lllll] &
1_{ 3 }^ 3 \ar@/_30pt/[rrrrrrrr] &
0_{ 2 }^ 2 \ar@/_30pt/[llllll] &
0_{ 5 }^ 3 \ar@/_30pt/[llllll] &
0_{ 8 }^ 1 \ar@/_30pt/[llllll] &
0_{ 9 }^ 1 \ar@/_30pt/[llllll] &
0_{ 7 }^ 3 \ar@/_30pt/[llllll] &
0_{ 6 }^ 3 \ar@/_30pt/[llllll] &
0_{ 3 }^ 2 \ar@/_30pt/[llllll] &
0_{ 8 }^ 3 \ar@/_30pt/[llllll]
}.$$
\vspace{4mm}
\\
We see that this $S'$ is not generic.
\end{example}

For certain Newton polygons $\xi$,
the ABS associated to $N_\xi$ is described as follows:

\begin{lemma} \label{LemOfl212l1}
Let $N_{\xi}$ be the minimal ${\rm DM_1}$ of $\xi = (m_1, n_1) + (m_2, n_2)$
with $\lambda_2 < 1/2 < \lambda_1$.
For the above notation, 
the sequence $S$ associated to $N_\xi$ is obtained by the following:
\begin{eqnarray}
\underbrace{1^1_1\cdots 1^1_{m_1}}_{m_1}
\underbrace{0^1_{m_1+1}\cdots 0^1_{n_1}}_{n_1-m_1}
\underbrace{1^2_1\cdots 1^2_{n_2}}_{n_2}
\underbrace{0^1_{n_1+1}\cdots 0^1_{h_1}}_{m_1}
\underbrace{1^2_{n_2+1}\cdots 1^2_{m_2}}_{m_2-n_2}
\underbrace{0^2_{m_2+1}\cdots 0^2_{h_2}}_{n_2}.
\end{eqnarray}
\end{lemma}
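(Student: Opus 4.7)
The plan is to identify the order on $T(S) = T(S_1) \sqcup T(S_2)$ prescribed by the direct sum (Notation~\ref{NotOfNxi} and Definition~\ref{DefOfDirectSum}). By Remark~\ref{RemOfSimpleABS}, each summand $S_r$ has $T(S_r) = \{t^r_1 < \cdots < t^r_{h_r}\}$ with $\Delta(t^r_i) = 1$ iff $i \leq m_r$ and $\Pi(t^r_i) = t^r_{i-m_r \bmod h_r}$; by Proposition~\ref{PropOfBinExp}(ii) this internal ordering already coincides with the $b$-ordering, so only the interleaving of the two summands needs to be determined. A first-digit computation shows that $\Delta(\Pi^{-1}(t^r_i)) = \Delta(t^r_{(i+m_r)\bmod h_r})$ equals $0$ iff $i \leq n_r$, since $i+m_r \leq h_r$ in that range, while for $i > n_r$ one gets $(i+m_r)-h_r = i-n_r \leq m_r$. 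This partitions $T(S)$ into a low half of size $n_1+n_2$ (with $b<1/2$) and a high half of size $m_1+m_2$ (with $b>1/2$), which already matches the split of the displayed expression into its first $n_1+n_2$ entries and its last $m_1+m_2$ entries.

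The next step is to show that within each half, all $S_1$-symbols precede all $S_2$-symbols. Combined with internal monotonicity, the low-half claim collapses to the single inequality $b(t^1_{n_1}) < b(t^2_1)$. For the high half, the doubling identity $b(t) = \Delta(\Pi^{-1}(t))/2 + b(\Pi^{-1}(t))/2$ together with $\Pi^{-1}(t^1_{h_1}) = t^1_{m_1}$ and $\Pi^{-1}(t^2_{n_2+1}) = t^2_1$ reduces the inequality $b(t^1_{h_1}) < b(t^2_{n_2+1})$ to $b(t^1_{m_1}) < b(t^2_1)$, which in turn follows from the low-half inequality together with the internal bound $b(t^1_{m_1}) \leq b(t^1_{n_1})$. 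Hence the lemma as a whole is reduced to the single core inequality $b(t^1_{n_1}) < b(t^2_1)$.

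For this core inequality I will analyze the two binary digit sequences explicitly. The $k$-th digit of $b(t^1_{n_1})$ is $[(k-1)m_1 \bmod h_1 \leq m_1]$ (treating $0 \bmod h_1$ as $h_1$), and that of $b(t^2_1)$ is $[(1+km_2) \bmod h_2 \leq m_2]$. A direct computation shows that both sequences begin with $01$, that the third digit of $b(t^1_{n_1})$ is always $0$, and that the third digit of $b(t^2_1)$ is $1$ when $m_2 \geq 2n_2$; this handles the simpler sub-case. The main obstacle is the sub-case $n_2 < m_2 < 2n_2$, in which the third digits agree and one must iterate. The strategy is iterated doubling reduction: each step either separates the two sequences across $1/2$ (producing an immediate strict inequality of the required sign), or produces a new pair of symbols with a common $\Delta$-value, to which Proposition~\ref{PropOfBinExp}(i) may be applied to rewrite the comparison in terms of $\Pi$-images. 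Because the slope hypothesis $\lambda_2 < 1/2 < \lambda_1$ enforces $m_1/h_1 < 1/2 < m_2/h_2$ on the asymptotic $1$-densities of the two digit sequences, this iteration is forced to terminate at some digit index where the $S_1$-digit is $0$ and the $S_2$-digit is $1$, completing the proof.
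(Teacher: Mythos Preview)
The paper itself does not give an argument here: its entire proof is the citation ``See \cite{HarashitaCon}, Proposition~4.20.'' Your attempt to supply a self-contained proof via the direct-sum description and binary expansions is therefore a genuine addition, and your reduction of the statement to the single inequality $b(t^1_{n_1}) < b(t^2_1)$ is correct and cleanly carried out.

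The last paragraph, however, has a real gap. The density argument shows only that the two digit sequences of $b(t^1_{n_1})$ and $b(t^2_1)$ differ \emph{somewhere}; it does not, by itself, control the \emph{direction} of the first difference. Your sentence ``this iteration is forced to terminate at some digit index where the $S_1$-digit is $0$ and the $S_2$-digit is $1$'' is an assertion of exactly the thing that remains to be proved. A priori the first discrepancy could just as well have $S_1$-digit $1$ and $S_2$-digit $0$, which would give the wrong inequality; asymptotic density places no constraint on any single finite prefix. Appealing to Proposition~\ref{PropOfBinExp}(i) does not help either, since that proposition presupposes the order on $T(S)$ that you are trying to determine.

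One way to close the gap is to compare partial digit sums. Writing $x_k$ and $y_k$ for the $k$-th digits of $b(t^1_{n_1})$ and $b(t^2_1)$, your own formulas give
\[
X_K:=\sum_{k=1}^{K}x_k=\Bigl\lceil \tfrac{(K-1)m_1}{h_1}\Bigr\rceil,
\qquad
Y_K:=\sum_{k=1}^{K}y_k=\Bigl\lfloor \tfrac{Km_2}{h_2}\Bigr\rfloor.
\]
From $m_1/h_1<1/2<m_2/h_2$ one checks the elementary bound $X_K\le \lfloor K/2\rfloor \le Y_K$ for every $K\ge 1$ (split into $K$ even and $K$ odd). Hence at the first index $K$ where $x_K\ne y_K$ one has $X_{K-1}=Y_{K-1}$ and $X_K\le Y_K$, forcing $x_K=0$, $y_K=1$, which is the required direction. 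With this inserted, your argument is complete.
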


\begin{proof}
See \cite{HarashitaCon}, Proposition~4.20.
\end{proof}

In Construction~\ref{ConstAn} and Construction~\ref{ConstBn},
we introduce a method to construct specializations $S'$ of $S$
combinatorially.
Using this method
we can calculate lengths of specializations,
and classify generic specializations.
For instance, in Proposition~\ref{PropOfrq} and Corollary~\ref{CoroOfrr+1},
using this construction, we give a necessary condition for
a specialization to be generic.

\begin{construction} \label{ConstAn}
Let $S$ be the ABS of a minimal ${\rm DM_1}$.
Let $S^{(0)}$ be the small modification by $0^r_i$ and $1^q_j$.
Set $\delta = \Delta(S^{(0)})$ and $\pi = \Pi(S^{(0)})$.
For non-negative integers $n$, 
we write $\alpha_n$ for $\pi^n(0^r_i)$.
We define a subset $A_0$ of $T(S^{(0)})$ to be
$$A_0 = \{ t \in T(S^{(0)}) \mid t < \alpha_0 \text{ and } \alpha_1 < \pi(t) 
\text{ in } T(S^{(0)}), \text{ with } \delta(t) = 0\}$$
endowed with the order induced from $T(S^{(0)})$.
Let $n$ be a natural number.
We construct an ABS $S^{(n)}$ and a set $A_n$
by the ABS $S^{(n-1)}$ and the set $A_{n-1}$
as follows.
Let $T(S^{(n)}) = T(S^{(n-1)})$ as sets.
We define the order on $T(S^{(n)})$ so that
for $t < t'$ in $S^{(n-1)}$,
we have $t > t'$ if and only if
$\alpha_n < t' \leq \pi(t_{\rm max})$ and $t = \alpha_n$  in $S^{(n-1)}$.
Here $t_{\rm max}$ is the maximum element of $A_{n-1}$.
We define the set $A_n$ by
$$A_n = 
\{ t \in T(S^{(n)}) - T(S_q) \mid t < \alpha_n \text{ and } \alpha_{n+1} < \pi(t)
\text{ in } T(S^{(n)})
\text{ with } \delta(t) = \delta(\alpha_n)\}$$
endowed with the order induced from $S^{(n)}$.
Thus we obtain the ABS $S^{(n)} = (T(S^{(n)}), \delta, \pi)$ and the set $A_n$.
\end{construction}

Proposition~\ref{PropOfAVanish} implies that 
if the specialization obtained by a small modification is generic,
then there exists a non-negative integer $a$ such that
$A_a = \emptyset$.
Now we suppose that there exists such an integer $a$.
Then we can define the following ABS's and sets.

\begin{construction} \label{ConstBn}
For the ABS $S$ corresponding to a minimal ${\rm DM_1}$,
let $S^{(0)}$ be the small modification by $0^r_i$ and $1^q_j$.
We write $\delta$ for $\Delta(S^{(0)})$ and $\pi$ for $\Pi(S^{(0)})$.
Put $\beta_n = \pi^n(1^q_j)$ for non-negative integers $n$.
Assume that there exists the minimum non-negative integer $a$ 
such that $A_a = \emptyset$,
we define a set $B_0$ by
$$B_0 = \{t \in T(S^{(a)}) \mid \beta_0 < t \text{ and } \pi(t) < \beta_1
\text{ in } T(S^{(a)}) \text{ with } \delta(t) = 1\}$$
endowed with the order induced from $T(S^{(a)})$.
For the ABS $S^{(a+n-1)}$ and 
the set $B_{n-1}$, we define 
an ABS $S^{(a+n)}$ as follows.
Let $T(S^{(a+n)}) = T(S^{(a+n-1)})$ as sets.
Let $\Delta(S^{(a+n)}) = \Delta(S^{(a+n-1)})$ 
and $\Pi(S^{(a+n)}) = \Pi(S^{(a+n-1)})$.
The ordering of $T(S^{(a+n)})$ is given so that
for $t < t'$ in $S^{(a+n-1)}$, 
we have $t > t'$ if and only if
$\pi(t_{\rm min}) \leq t < \beta_n$ and $t' = \beta_n$,
where $t_{\rm min}$ is the minimum element of $B_{n-1}$.
We define the set $B_n$ as 
$$B_n = \{t \in T(S^{(a+n)}) \mid \beta_n < t \text{ and } \pi(t) < \beta_{n+1} 
\text{ in } T(S^{(a+n)}) \text{ with } \delta(t) = \delta(\beta_n)\}$$
with the ordering obtained from the order on $S^{(a+n)}$.
Thus we obtain the ABS $S^{(a+n)}$ and the set $B_n$.
\end{construction}

For a small modification by $0^r_i$ and $1^q_j$,
if there exists non-negative integers $a$ and $b$ such that
$A_a = \emptyset$ and $B_b = \emptyset$,
then 
we call the ABS $S^{(a+b)}$ the {\it full modification by $0^r_i$ and $1^q_j$}.
In Proposition~\ref{PropOfBVanish},
we will see that
if the specialization is generic,
then for the above sets $B_n$,
there exists a non-negative integer $b$ such that
$B_b = \emptyset$, i.e., 
the generic specialization is obtained by the ABS $S^{(a+b)}$
for some integers $a$ and $b$.

\begin{example} \label{ExOfFullModi2735}
Let $\xi = (2, 7) + (3, 5)$.
Let $S$ be the ABS of $\xi$.
The small modification $S^{(0)}$ by $0^1_4$ and $1^2_2$
is described as 
$$S^{(0)}: \xymatrix@=1pt
{
1_{ 1 }^ 1 \ar@/_30pt/[rrrrrrrrrrrr] &
1_{ 2 }^ 1 \ar@/_30pt/[rrrrrrrrrrrr] &
0_{ 3 }^ 1 \ar@/_30pt/[ll] &
1_{ 2 }^ 2 \ar@/_40pt/[rrrrrrrrrrrr] &
0_{ 5 }^ 1 \ar@/_30pt/[ll]|\times &
1_{ 1 }^ 2 \ar@/_30pt/[rrrrrrrrr] &
0_{ 4 }^ 1 \ar@/_40pt/[lllll]|\circ &
0_{ 6 }^ 1 \ar@/_30pt/[llll] &
0_{ 7 }^ 1 \ar@/_30pt/[llll] &
1_{ 3 }^ 2 \ar@/_30pt/[rrrrrrr] &
0_{ 4 }^ 2 \ar@/_30pt/[lllll] &
0_{ 5 }^ 2 \ar@/_30pt/[lllll] &
0_{ 8 }^ 1 \ar@/_30pt/[lllll] &
0_{ 9 }^ 1 \ar@/_30pt/[lllll] &
0_{ 6 }^ 2 \ar@/_30pt/[lllll] &
0_{ 7 }^ 2 \ar@/_30pt/[lllll] &
0_{ 8 }^ 2 \ar@/_30pt/[lllll]
}.$$
\vspace{6mm}
\\
For this $S^{(0)}$, we have sets
$A_0 = \{0^1_5\}$ and $A_1 = \emptyset$.
The ABS $S^{(1)}$ is obtained by
\\
\\
\vspace{6mm}
$$S^{(1)}: \xymatrix@=1pt
{
1_{ 1 }^ 1 \ar@/_30pt/[rrrrrrrrrrrr] &
0_{ 3 }^ 1 \ar@/_30pt/[l] &
1_{ 2 }^ 1 \ar@/_30pt/[rrrrrrrrrrr] &
1_{ 2 }^ 2 \ar@/_40pt/[rrrrrrrrrrrr]|\circ &
0_{ 5 }^ 1 \ar@/_30pt/[lll] &
1_{ 1 }^ 2 \ar@/_30pt/[rrrrrrrrr]|\times &
0_{ 4 }^ 1 \ar@/_30pt/[llll] &
0_{ 6 }^ 1 \ar@/_30pt/[llll] &
0_{ 7 }^ 1 \ar@/_30pt/[llll] &
1_{ 3 }^ 2 \ar@/_30pt/[rrrrrrr] &
0_{ 4 }^ 2 \ar@/_30pt/[lllll] &
0_{ 5 }^ 2 \ar@/_30pt/[lllll] &
0_{ 8 }^ 1 \ar@/_30pt/[lllll] &
0_{ 9 }^ 1 \ar@/_30pt/[lllll] &
0_{ 6 }^ 2 \ar@/_30pt/[lllll] &
0_{ 7 }^ 2 \ar@/_30pt/[lllll] &
0_{ 8 }^ 2 \ar@/_30pt/[lllll]
}.$$
\\
By the above
$B_0 = \{1^2_1\}$. 
We have $B_1 = \{0^2_7\}$ with the ABS
$$S^{(2)}: \xymatrix@=1pt
{
1_{ 1 }^ 1 \ar@/_30pt/[rrrrrrrrrrrr] &
0_{ 3 }^ 1 \ar@/_30pt/[l] &
1_{ 2 }^ 1 \ar@/_30pt/[rrrrrrrrrrr] &
1_{ 2 }^ 2 \ar@/_30pt/[rrrrrrrrrrr] &
0_{ 5 }^ 1 \ar@/_30pt/[lll] &
1_{ 1 }^ 2 \ar@/_30pt/[rrrrrrrrrr] &
0_{ 4 }^ 1 \ar@/_30pt/[llll] &
0_{ 6 }^ 1 \ar@/_30pt/[llll] &
0_{ 7 }^ 1 \ar@/_30pt/[llll] &
1_{ 3 }^ 2 \ar@/_30pt/[rrrrrrr] &
0_{ 4 }^ 2 \ar@/_30pt/[lllll] &
0_{ 5 }^ 2 \ar@/_30pt/[lllll] &
0_{ 8 }^ 1 \ar@/_30pt/[lllll] &
0_{ 9 }^ 1 \ar@/_30pt/[lllll] &
0_{ 7 }^ 2 \ar@/_30pt/[llll]|\circ &
0_{ 6 }^ 2 \ar@/_40pt/[llllll]|\times &
0_{ 8 }^ 2 \ar@/_30pt/[lllll]
}.$$
\vspace{6mm}
\\
Clearly $B_2 = \emptyset$.
Hence we see $a = 1$ and $b = 1$.
One can check that the full modification $S^{(3)}$ is equal to $S'$
of Example~\ref{ExOfSpe2735}.
\end{example}

\begin{example} \label{ExOfFullModi271235}
For the ABS $S$ of $\xi = (2, 7) + (1, 2) + (3, 5)$,
Consider the small modification by $0^1_4$ and $1^3_2$.
Then the ABS $S^{(0)}$ is 
$$S^{(0)}: \xymatrix@=1pt
{
1_{ 1 }^ 1 \ar@/_30pt/[rrrrrrrrrrrrrr] &
1_{ 2 }^ 1 \ar@/_30pt/[rrrrrrrrrrrrrr] &
0_{ 3 }^ 1 \ar@/_30pt/[ll] &
1_{ 2 }^ 3 \ar@/_40pt/[rrrrrrrrrrrrrrr] &
0_{ 5 }^ 1 \ar@/_30pt/[ll]|\times &
1_{ 1 }^ 2 \ar@/_30pt/[rrrrrrrrrrr] &
1_{ 1 }^ 3 \ar@/_30pt/[rrrrrrrrrrr] &
0_{ 4 }^ 1 \ar@/_40pt/[llllll]|\circ &
0_{ 6 }^ 1 \ar@/_30pt/[lllll] &
0_{ 7 }^ 1 \ar@/_30pt/[lllll] &
0_{ 2 }^ 2 \ar@/_30pt/[lllll] &
1_{ 3 }^ 3 \ar@/_30pt/[rrrrrrrr] &
0_{ 4 }^ 3 \ar@/_30pt/[llllll] &
0_{ 5 }^ 3 \ar@/_30pt/[llllll] &
0_{ 8 }^ 1 \ar@/_30pt/[llllll] &
0_{ 9 }^ 1 \ar@/_30pt/[llllll] &
0_{ 3 }^ 2 \ar@/_30pt/[llllll] &
0_{ 6 }^ 3 \ar@/_30pt/[llllll] &
0_{ 7 }^ 3 \ar@/_30pt/[llllll] &
0_{ 8 }^ 3 \ar@/_30pt/[llllll]
}.$$
\vspace{6mm}
\\
We have
$A_0 = \{0^1_5\}$ and $A_1 = \emptyset$.
Thus we see $a = 1$.
We have the set $B_0 = \{1^2_1, 1^3_1\}$ and $B_1 = \{0^2_3, 0^3_6\}$
with the ABS $S^{(2)}$
$$S^{(2)}: \xymatrix@=1pt
{
1_{ 1 }^ 1 \ar@/_30pt/[rrrrrrrrrrrrrr] &
0_{ 3 }^ 1 \ar@/_30pt/[l] &
1_{ 2 }^ 1 \ar@/_30pt/[rrrrrrrrrrrrr] &
1_{ 2 }^ 3 \ar@/_30pt/[rrrrrrrrrrrrr] &
0_{ 5 }^ 1 \ar@/_30pt/[lll]&
1_{ 1 }^ 2 \ar@/_30pt/[rrrrrrrrrrrr] &
1_{ 1 }^ 3 \ar@/_30pt/[rrrrrrrrrrrr] &
0_{ 4 }^ 1 \ar@/_30pt/[lllll]&
0_{ 6 }^ 1 \ar@/_30pt/[lllll] &
0_{ 7 }^ 1 \ar@/_30pt/[lllll] &
0_{ 2 }^ 2 \ar@/_30pt/[lllll] &
1_{ 3 }^ 3 \ar@/_30pt/[rrrrrrrr] &
0_{ 4 }^ 3 \ar@/_30pt/[llllll] &
0_{ 5 }^ 3 \ar@/_30pt/[llllll] &
0_{ 8 }^ 1 \ar@/_30pt/[llllll] &
0_{ 9 }^ 1 \ar@/_30pt/[llllll] &
0_{ 7 }^ 3 \ar@/_30pt/[llll]|\circ &
0_{ 3 }^ 2 \ar@/_40pt/[lllllll]|\times &
0_{ 6 }^ 3 \ar@/_40pt/[lllllll]|\times &
0_{ 8 }^ 3 \ar@/_30pt/[llllll]
}.$$
\vspace{6mm}
\\
By the ABS $S^{(2)}$, we obtain the set $B_2 = \{0^2_2\}$
and the ABS
$$S^{(3)}: \xymatrix@=1pt
{
1_{ 1 }^ 1 \ar@/_30pt/[rrrrrrrrrrrrrr] &
0_{ 3 }^ 1 \ar@/_30pt/[l] &
1_{ 2 }^ 1 \ar@/_30pt/[rrrrrrrrrrrrr] &
1_{ 2 }^ 3 \ar@/_30pt/[rrrrrrrrrrrrr] &
0_{ 5 }^ 1 \ar@/_30pt/[lll]&
1_{ 1 }^ 2 \ar@/_30pt/[rrrrrrrrrrrr] &
1_{ 1 }^ 3 \ar@/_30pt/[rrrrrrrrrrrr] &
0_{ 4 }^ 1 \ar@/_30pt/[lllll]&
0_{ 6 }^ 1 \ar@/_30pt/[lllll] &
0_{ 7 }^ 1 \ar@/_30pt/[lllll] &
0_{ 4 }^ 3 \ar@/_30pt/[llll]|\circ &
0_{ 2 }^ 2 \ar@/_40pt/[llllll]|\times &
1_{ 3 }^ 3 \ar@/_30pt/[rrrrrrr] &
0_{ 5 }^ 3 \ar@/_30pt/[llllll] &
0_{ 8 }^ 1 \ar@/_30pt/[llllll] &
0_{ 9 }^ 1 \ar@/_30pt/[llllll] &
0_{ 7 }^ 3 \ar@/_30pt/[llllll] &
0_{ 3 }^ 2 \ar@/_30pt/[llllll] &
0_{ 6 }^ 3 \ar@/_30pt/[llllll] &
0_{ 8 }^ 3 \ar@/_30pt/[llllll]
}.$$
\vspace{6mm}
\\
Similarly, we obtain $B_3 = \{1^2_1\}$, $B_4 = \{0^2_3\}$ and $B_5 = \emptyset$
with the ABS's $S^{(4)}$, $S^{(5)}$ and $S^{(6)}$.
Hence we have $b = 5$,
and the full modification $S^{(6)}$ is equal to $S'$ of Example~\ref{ExOfSpe271235}.
\end{example}

\section{The case of arbitrary Newton polygons} \label{ArbitNP}

In this section,
we will prove Theorem~\ref{ThmOfOneToOne}.
The steps of the proof is as follows.
Firstly, in Section~\ref{SomePropOfNP},
we introduce some operations of Newton polygons
which reduce the problem to the case that
Newton polygons satisfy a special property.
Secondly, in Section~\ref{ConstOfFullModifi},
we introduce a combinatorial way to construct 
generic specializations.
Finally, in Section~\ref{Proof}, we prove Theorem~\ref{ThmOfOneToOne}.

\subsection{Euclidean algorithm for Newton polygons} \label{SomePropOfNP}

Here, we introduce some properties of Newton polygons
and corresponding ${\rm DM_1}$'s.
First, we define two operations of Newton polygons,
which are used in Section~\ref{SomePropOfNP}.
See Section~\ref{Definitions} \eqref{EqOfNP}
for the notation of Newton polygons.
Let $\xi = \sum_{i=1}^z (m_i, n_i)$ be a Newton polygon.
We define the Newton polygon $\xi^{\rm D}$ by
$$\xi^{\rm D} = \sum_{i = 1}^z (n_{z - i + 1}, m_{z - i + 1}).$$
We call this $\xi^{\rm D}$ the {\it dual of $\xi$}.
Moreover, for a Newton polygon $\xi$ satisfying $m_i \leq n_i$ for all $i$, 
we define the Newton polygon $\xi^{\rm C}$ by
$$\xi^{\rm C} = \sum_{i = 1}^z (m_i, n_i - m_i),$$
and we call this $\xi^{\rm C}$ the {\it curtailment of $\xi$}.

We denote by ${\rm NP}$ the set of Newton polygons.
Let ${\rm NP^{sep}}$ be the subset of ${\rm NP}$ consisting of 
Newton polygons $(m_1, n_1) + (m_2, n_2) + \dots + (m_z, n_z)$
with $n_z/(m_z + n_z) < 1/2 < n_1/(m_1 + n_1)$.
Using the above, 
we construct a map $\Phi: {\rm NP} \to {\rm NP^{sep}}$
which allow us to suppose that the Newton polygon $\xi$ consists of 
two segments and $\xi \in {\rm NP^{sep}}$ for the set $B(\xi)$.
Let $\xi = \sum_{i=1}^z (m_i, n_i)$ be a Newton polygon.
We may assume that $m_i \leq n_i$ for all $i$ by the duality.
First, we treat the case $z = 2$.
Let $q_i$ and $r_i$ be the non-negative integers such that
$n_i = q_i m_i + r_i$ with $r_i < m_i$,
i.e., we denote by $q_i$ and $r_i$ the quotient and the reminder
obtained by dividing $n_i$ by $m_i$.
As $\lambda_2 < \lambda_1$, we have $q_2 \leq q_1$.
In fact, $\lambda_2 < \lambda_1$ implies that 
$m_1 m_2(q_2 - q_1) < m_2 r_1 - m_1 r_2$.
Suppose $q_1 < q_2$.
Then $m_1 m_2 < m_2r_1 - m_1r_2$ holds.
It induces that $m_2(r_1 - m_1) > m_1r_2 > 0$.
This is a contradiction.

Let us construct the Newton polygon $\Phi(\xi)$.
If $q_2 < q_1$, then constructing curtailments, 
we obtain the desired Newton polygon 
$\Phi(\xi)$ by $\Phi(\xi) = (m_1, n_1 - q_2 m_1) + (m_2, r_2)$.
Assume that $q_1 = q_2$.
Constructing curtailments, we obtain the Newton polygon $(m_1, r_1) + (m_2, r_2)$.
Take the dual $(r_2, m_2) + (r_1, m_1)$, and
we can repeat constructing curtailments.
We may assume $r_1 = 1$ or $r_2 = 1$ since $\gcd(m_i, n_i) = 1$.
We divide the proof into the three cases:
\begin{itemize}
\item[(i)] $\xi = (1, n_1) + (1, n_2)$;
\item[(ii)] $\xi = (m_1, n_1) + (1, n_2)$;
\item[(iii)] $\xi = (1, n_1) + (m_2, n_2)$.
\end{itemize}

In the case (i), if $n_1 > n_2 + 1$, then
we obtain $\Phi(\xi) = (1, n_2 - n_1) + (1, 0)$.
If $n_2 = n_1 + 1$, then by the Newton polygon $(1, 1) + (1, 0)$
obtained by curtailments, 
we take the dual, and we get $\Phi(\xi) = (0, 1) + (1, 0)$.

In the case (ii), we have $q_2 = n_2$.
if $q_1 = q_2$, then by the Newton polygon $(m_1, r_1) + (1, 0)$
obtained by curtailments, constructing the dual and 
curtailments we get $\Phi(\xi) = (0, 1) + (r_1, r)$, where 
$r$ is the reminder obtained by dividing $m_1$ by $r_1$.

In the case (iii), if $q_2 = q_1 = n_1$, then we have $n_2 = n_1 m_2 + r_2$.
On the other hand, it follows that $n_1 m_2 - n_2 > 0$
by the condition of the Newton polygon $\xi$.
It implies $r_2 < 0$, and we have a contradiction.
If $q_1 > q_2 + 1$, then we get $\Phi(\xi) = (1, n_1 - q_2) + (m_2, r_2)$.
If $q_1 = q_2 + 1$, then by curtailments and the dual we have 
the Newton polygon $(r_2, m_2) + (1, 1)$.
By the case (i) and (ii), we can get the desired Newton polygon $\Phi(\xi)$
of this Newton polygon.

Finally, we treat the case $z > 2$.
We have the Newton polygon $\Phi(\eta) = (c_1, d_1) + (c_z, d_z)$ of 
$\eta = (m_1, n_1) + (m_z, n_z)$.
Apply the same operation
constructing $\Phi(\eta)$ from $\eta$ to $\xi$,
and we get the desired $\Phi(\xi) = \sum_{i=1}^z (c_i, d_i)$.



\begin{remark}
By the above construction, the Newton polygon $\Phi(\xi)$
is described as $\Phi(\xi) = \xi^{{\rm Q}_1 {\rm Q}_2 \cdots {\rm Q}_m}$,
where ${\rm Q}_i$ is either the operation ${\rm C}$ or the operation ${\rm D}$ 
for every $i$.
Thus by the duality and Theorem~\ref{ThmOfSpeNP},
for all Newton polygons $\xi$ consisting of two segments,
we obtain a bijection from $B(\xi)$ to $B(\Phi(\xi))$.
\end{remark}

Thanks to this map $\Phi$,
we can partially describe the construction of the ABS
corresponding to a Newton polygon.
We state it as follows:

\begin{proposition} \label{PropOfTwoTerms}
Let $S$ be the ABS of a minimal ${\rm DM_1}$ $N_\xi$
with $\xi = \sum_{i = 1}^z (m_i, n_i)$.
For natural numbers $r$ and $q$ with $r < q \leq z$, we have 
\begin{itemize}
\item[(i)] $1^r_1 < 1^q_1$,
\item[(ii)] $0^r_{m_r+n_r} < 0^q_{m_q+n_q}$,
\item[(iii)] $0^r_{m_r+1} < 0^q_{m_q+1}$
\end{itemize}
in the set $T(S)$.
\end{proposition}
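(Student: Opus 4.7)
The idea is to reduce all three inequalities to strict inequalities between binary expansions, and then to compute those expansions in closed form in terms of the slope $\alpha_i := m_i/(m_i+n_i) = 1-\lambda_i$. By Definition~\ref{DefOfDirectSum}(i) the ordering of $T(S)$ is compatible with the numerical ordering of binary expansions, so it suffices to show $b(1^r_1) < b(1^q_1)$, $b(0^r_{m_r+n_r}) < b(0^q_{m_q+n_q})$, and $b(0^r_{m_r+1}) < b(0^q_{m_q+1})$; note that the hypothesis $r<q$ yields $\alpha_r < \alpha_q$.

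Using Remark~\ref{RemOfSimpleABS}, namely $\Pi(S_i)(t^i_k) = t^i_{k - m_i \bmod (m_i+n_i)}$, a direct calculation gives
\[
b_j(1^i_1) = \lfloor j\alpha_i\rfloor - \lfloor (j-1)\alpha_i\rfloor, \qquad b_j(0^i_{m_i+n_i}) = \lceil j\alpha_i\rceil - \lceil (j-1)\alpha_i\rceil,
\]
and the congruence $(m_i+1) + (j-1)m_i \equiv 1 + jm_i \pmod{m_i+n_i}$ yields $b_j(0^i_{m_i+1}) = b_{j+1}(1^i_1)$. Thus the partial sums of the digit sequences of $b(1^i_1)$ and $b(0^i_{m_i+n_i})$ are $\lfloor j\alpha_i\rfloor$ and $\lceil j\alpha_i\rceil$, respectively.

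The rest is a uniform monotonicity argument. For $0 < \alpha_r < \alpha_q < 1$, one has $\lfloor j\alpha_r\rfloor \leq \lfloor j\alpha_q\rfloor$ and $\lceil j\alpha_r\rceil \leq \lceil j\alpha_q\rceil$ for every $j\geq 1$, and both become strict once $j(\alpha_q-\alpha_r) > 1$. Let $j_0$ be the smallest $j$ at which the relevant partial sums differ; then the individual digits coincide for $j < j_0$, while at $j_0$ the digit attached to $r$ equals $0$ and that attached to $q$ equals $1$. This yields (i) and (ii). For (iii), observe that $\Pi(S_i)^{-1}(1^i_1) = t^i_{m_i+1}$ has $\Delta$-value $0$, so $b_1(1^i_1) = 0$ for every $i$, hence the first index of disagreement in (i) is at least $2$; the shift identity $b_j(0^i_{m_i+1}) = b_{j+1}(1^i_1)$ then transfers (i) to (iii).

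The main technical obstacle is the first step: translating the cyclic definition $\Pi(S_i)(t^i_k) = t^i_{k-m_i \bmod (m_i+n_i)}$ into the floor/ceiling formulas above, because the discrepancy between $\lfloor\cdot\rfloor$ and $\lceil\cdot\rceil$ manifests precisely at the endpoints $1^i_1$ and $0^i_{m_i+n_i}$ of each simple ABS. Once this bookkeeping is handled, a single monotonicity observation proves all three parts uniformly.
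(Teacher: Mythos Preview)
Your approach is correct and takes a genuinely different route from the paper. The paper first reduces (ii) and (iii) to (i)---(ii) via duality $\xi\mapsto\xi^{\rm D}$, and (iii) via the observation that $0^r_{m_r+1}$ and $0^q_{m_q+1}$ are the $\Pi(S)$-preimages of $1^r_1$ and $1^q_1$---and then proves (i) for $z=2$ by an Euclidean-algorithm induction: Lemma~\ref{LemOfl212l1} supplies the base case $\lambda_2<1/2<\lambda_1$, and the operations $\xi\mapsto\xi^{\rm D}$ and $\xi\mapsto\xi^{\rm C}$ of Section~\ref{SomePropOfNP} reduce a general two-segment $\xi$ to that base case. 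You instead identify the digit sequences of $b(1^i_1)$ and $b(0^i_{h_i})$ as the Sturmian words $\lfloor j\alpha_i\rfloor-\lfloor (j-1)\alpha_i\rfloor$ and $\lceil j\alpha_i\rceil-\lceil (j-1)\alpha_i\rceil$ and finish all three parts with a single monotonicity step. The paper's argument has the virtue of reusing the curtailment/duality machinery that is built anyway for Theorems~\ref{ThmOfOneToOne} and~\ref{ThmOfSpeNP}; yours is more self-contained and elementary, and yields explicit closed forms for the binary expansions as a by-product. One small caveat: your claim that $r<q$ forces $\alpha_r<\alpha_q$ presumes $\lambda_r\neq\lambda_q$, whereas the paper's convention allows repeated slopes; however, the paper's own proof here also tacitly assumes distinct slopes, since the Euclidean reduction does not terminate otherwise.
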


\begin{proof}
It suffices to show (i). In fact,
considering the dual $\xi^{\rm D}$ of $\xi$,
if (i) is true, then (ii) holds.
Moreover, if (i) holds, then we get (iii) since
$0^r_{m_r+1}$ and $0^q_{m_q+1}$ are the inverse images of
$1^r_1$ and $1^q_1$ respectively by $\Pi(S)$.

For a Newton polygon $\xi$,
let ${\rm P}(\xi)$ denote the assertion:
{\it The ABS associated to the minimal ${ DM_1}$ $N_\xi$ satisfies (i).}
To show the lemma, it suffices to deal with the case $z = 2$.
By Proposition~\ref{LemOfl212l1}, if $\xi$ satisfies that 
$\lambda_2 < 1/2 < \lambda_1$,
then ${\rm P}(\xi)$ holds.
To show that ${\rm P}(\xi)$ is true for all Newton polygons $\xi$,
we claim
\begin{itemize}
\item[(A)] If ${\rm P}(\xi^{\rm D})$ holds, then ${\rm P}(\xi)$ also holds;
\item[(B)] If ${\rm P}(\xi^{\rm C})$ holds, then ${\rm P}(\xi)$ also holds.
\end{itemize}
The claim (A) is obvious by the duality.
Moreover, By the construction of $N_\xi$ and $N_{\xi^{\rm C}}$,
we see that (B) holds.
The assertion of the lemma follows from 
(A), (B) and Section~\ref{SomePropOfNP}.
\end{proof}

Let $S$ (resp. $R$) be the ABS of $\xi$ (resp. $\xi^{\rm C}$).
Next, we describe a relation between $S$ and $R$.
In the following lemma, we show that
the set $T(R)$ can be regarded as a subset of $T(S)$
as ordered sets.
This relation is used for the proof of Lemma~\ref{LemOf01}
and Theorem~\ref{ThmOfSpeNP}.

\begin{lemma} \label{LemOfXiAndXiC}
Let $\xi = (m_1, n_1) + (m_2, n_2)$ be a Newton polygon
consisting of two segments
with $m_i \leq n_i$ for $i = 1, 2$.
Let $S$ and $R$ be the ABS of $\xi$ and $\xi^{\rm C}$ respectively.
Then $T(R)$ is contained in $T(S)$ as an ordered set.
We have
\begin{eqnarray} \label{EqOfTR1AndTS1}
\{t \in T(R) \mid \Delta(R)(t) = 1\} = \{t \in T(S) \mid \Delta(S)(t) = 1\}
\end{eqnarray}
and 
\begin{eqnarray}
T(S) - T(R) = \{\Pi(S)(t) \mid t \in T(S) \text{ with } \Delta(S)(t) = 1\}.
\end{eqnarray}
Let $t$ be an element of $T(R) \subset T(S)$.
Then 
\begin{eqnarray}
\Pi(R)(t) =
\begin{cases}
\Pi(S)(t) & \text{if }\Delta(R)(t) = 0,\\
\Pi(S)^2(t) & \text{otherwise}.
\end{cases}
\end{eqnarray}
holds.
\end{lemma}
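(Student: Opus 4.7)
The plan is to proceed in three stages: first identify $T(R)$ as a subset of $T(S)$ via a natural element-wise bijection; second verify \eqref{EqOfTR1AndTS1}, the set equality $T(S) - T(R) = \{\Pi(S)(t) \mid \Delta(S)(t) = 1\}$, and the $\Pi(R)$ formula segment by segment from Remark~\ref{RemOfSimpleABS}; third verify that the orderings agree.

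For the first two stages, for each segment $r$, Remark~\ref{RemOfSimpleABS} gives $T(S_r) = \{t^r_1, \ldots, t^r_{m_r+n_r}\}$ and $T(R_r) = \{t^r_1, \ldots, t^r_{n_r}\}$, with 1-elements being the first $m_r$ of each (using $m_r \leq n_r$). I would identify $t^r_i$ in $R$ with $t^r_i$ in $S$ for $1 \leq i \leq n_r$, which gives \eqref{EqOfTR1AndTS1} immediately. The 0-elements of $T(S_r)$ missing from $T(R_r)$ are $\{t^r_{n_r+1}, \ldots, t^r_{m_r+n_r}\}$, which coincide with $\{\Pi(S_r)(t^r_i) : 1 \leq i \leq m_r\}$ via the formula $\Pi(S_r)(t^r_i) = t^r_{i+n_r}$ for $i \leq m_r$, yielding the second set equality. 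For the $\Pi(R)$ formula I would case-split: if $\Delta(R)(t^r_i) = 0$ (so $m_r < i \leq n_r$), both $\Pi(R_r)(t^r_i)$ and $\Pi(S_r)(t^r_i)$ equal $t^r_{i-m_r}$; if $\Delta(R)(t^r_i) = 1$ (so $i \leq m_r$), $\Pi(R_r)(t^r_i) = t^r_{i+n_r-m_r}$ matches $\Pi(S_r)^2(t^r_i) = \Pi(S_r)(t^r_{i+n_r}) = t^r_{i+n_r-m_r}$.

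The remaining and main task is to verify that the ordering on $T(R)$ defined by Definition~\ref{DefOfDirectSum} (as the direct sum $R_1 \oplus R_2$) coincides with the ordering inherited from $T(S)$. Within each segment the two orderings are trivially identical since both are $t^r_1 < \cdots < t^r_{n_r}$. For elements from different segments, both orderings come from binary expansions via Definition~\ref{DefOfDirectSum}(i). I would first establish that for $t \in T(R)$, the binary expansion $b_R(t)$ arises from $b_S(t)$ by deleting every bit at a position $i$ such that $b_i^S(t) = 0$ and $b_{i+1}^S(t) = 1$; these deletions are precisely the 0-elements of the $\Pi(S)^{-1}$-orbit of $t$ that lie in $T(S) \setminus T(R)$ and are therefore skipped by $\Pi(R)^{-1}$ in favour of jumping two $\Pi(S)^{-1}$-steps.

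The main obstacle is showing that this deletion operation preserves the order of binary expansions coming from elements of $T(R)$. Because the deletion pattern depends on the element's own bit sequence, the bits of $b_R(s)$ and $b_R(t)$ may be shifted by different amounts relative to $b_S(s)$ and $b_S(t)$. Proving that the first differing bit in $b_S$ still determines the order in $b_R$ would require a careful case analysis on the bits just after the first disagreement, organized by the values of $b_{i-1}^S$, $b_{i+1}^S(s)$, $b_{i+1}^S(t)$ surrounding the split position; the admissibility constraints from Proposition~\ref{PropOfBinExp} should be enough to conclude that the resulting inequality in $b_R$ always matches the one in $b_S$.
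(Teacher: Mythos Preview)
Your proposal is correct and amounts to unpacking precisely what the paper's one-line proof (``the assertion follows from Remark~\ref{RemOfSimpleABS} and Definition~\ref{DefOfDirectSum}'') leaves implicit. The ordering step you flag as the main obstacle is actually shorter than you anticipate: the hypothesis $m_i \le n_i$ forces every $b_S$ to avoid the pattern $11$, so at the first position $j$ where $b_S(s)$ and $b_S(t)$ disagree one has $b_{j-1}^S = 0$, and this $0$ is retained in $b_R(s)$ at exactly the slot where the retained $1$ from position $j$ appears in $b_R(t)$, giving $b_R(s) < b_R(t)$ without further case analysis.
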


\begin{proof}
The assertion follows from Remark~\ref{RemOfSimpleABS}
and Definition~\ref{DefOfDirectSum}.
\end{proof}


\subsection{Combinatorial construction of generic specializations}
\label{ConstOfFullModifi}

In this section, using Construction~\ref{ConstAn} and Construction~\ref{ConstBn},
we determine the {\it full modification}, which is a specialization 
constructed combinatorially,
for a given small modification.

We use the notation of Notation~\ref{NotOfNxi}.
Furthermore, we fix the following notation.
Let $S$ be the ABS associated to $N_\xi$.
Let $S^{(0)}$ be the small modification by $0^r_i$ and $1^q_j$.
Then we obtain arrowed binary sequences $S^{(1)}, S^{(2)}, \dots$ and 
sets $A_0, A_1, \dots$ by Construction~\ref{ConstAn}.
Put $\delta = \Delta(S^{(0)})$ and $\pi = \Pi(S^{(0)})$.
We set $\alpha_n = \pi^n(0^r_i)$
and $\beta_n = \pi^n(1^q_j)$
for non-negative integers $n$.

\begin{proposition} \label{PropOfAn}
Let $n$ be a natural number.
The set $A_n$ is given by
$$A_n 
= \{ \pi(t) \mid t \in A_{n-1},\ \pi(t) \not \in T(S_q) 
\text{ and } \delta(\pi(t)) = \delta(\alpha_n) \}.$$
\end{proposition}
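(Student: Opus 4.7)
My plan is to prove the equality by induction on $n$, carefully tracking how the ordering changes from $S^{(n-1)}$ to $S^{(n)}$. The map $\pi = \Pi(S^{(0)})$ is preserved throughout Construction~\ref{ConstAn}; only the underlying order moves. Explicitly, the passage from $<_{n-1}$ to $<_n$ shifts only $\alpha_n$, placing it after the interval $I_n := (\alpha_n, \pi(t_{\rm max})]_{n-1}$ where $t_{\rm max} := \max A_{n-1}$. Consequently, for $t \neq \alpha_n$, one has $t <_n \alpha_n$ if and only if either $t <_{n-1} \alpha_n$ or $t \in I_n$, and all comparisons not involving $\alpha_n$ agree in $<_{n-1}$ and $<_n$.

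For the inclusion $\supseteq$, I would take $t \in A_{n-1}$ with $\pi(t) \notin T(S_q)$ and $\delta(\pi(t)) = \delta(\alpha_n)$. The defining conditions of $A_{n-1}$ give $\alpha_n <_{n-1} \pi(t)$; combining this with $\pi(t) \leq_{n-1} \pi(t_{\rm max})$---which I would verify by a restricted order-preservation property of $\pi$ on $A_{n-1}$---yields $\pi(t) \in I_n$, and hence $\pi(t) <_n \alpha_n$. Applying the analogous order-preservation to $\alpha_n <_{n-1} \pi(t)$, legitimate because both sides share $\delta$-value $\delta(\alpha_n)$ by the filter, gives $\alpha_{n+1} = \pi(\alpha_n) <_{n-1} \pi(\pi(t))$; this transfers to $<_n$ since neither side is $\alpha_n$ (by orbit considerations). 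The remaining conditions $\pi(t) \notin T(S_q)$ and $\delta(\pi(t)) = \delta(\alpha_n)$ are built into the filter.

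For the inclusion $\subseteq$, I would take $u \in A_n$ and realize $u = \pi(s)$ for some $s \in A_{n-1}$. The dichotomy above rules out $u <_{n-1} \alpha_n$: combined with $\delta(u) = \delta(\alpha_n)$, the restricted order-preservation would force $\pi(u) <_{n-1} \alpha_{n+1}$, contradicting $\alpha_{n+1} <_n \pi(u)$ (the comparison with $\alpha_{n+1}$ transfers between the two orders). Hence $u \in I_n$. Setting $s := \pi^{-1}(u)$, the bijectivity of $\pi$, together with the summand structure that confines the $\alpha$-orbit to $T(S_r)$ and the filter $u \notin T(S_q)$, produces $s \notin T(S_q)$ and $\delta(s) = \delta(\alpha_{n-1})$. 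Inverting the restricted order-preservation on $\alpha_n <_{n-1} u \leq_{n-1} \pi(t_{\rm max})$ yields the remaining defining conditions of $A_{n-1}$ for $s$.

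The chief technical obstacle is formulating and proving the restricted order-preservation. The ABS $S^{(n-1)}$ is not admissible overall---in fact, $A_{n-1}$ is precisely the set of elements witnessing the failure of naive admissibility at step $n-1$---yet $\pi$ remains order-preserving on $A_{n-1}$ (together with $\alpha_n$ and $\pi(A_{n-1})$, the elements actually used above), and this restricted compatibility is propagated by each reordering step. I would prove this auxiliary statement by a parallel induction, exploiting that $\pi$ and $\pi_S$ differ only at $\pi_S^{-1}(\alpha_0)$ and $\pi_S^{-1}(\beta_0)$, and that each $\alpha_k$ lies in $T(S_r)$ while each $\beta_k$ lies in $T(S_q)$, so that the orbits under consideration stay disjoint from the anomalies introduced by the small modification.
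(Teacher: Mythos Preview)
Your proposal is correct and follows the same double-inclusion strategy as the paper, which proves both containments directly from the definition of the reordering in Construction~\ref{ConstAn}. The paper's argument is considerably terser: it simply asserts the needed order relations (for instance, that $\alpha_{n+1} < \pi(\pi(t))$ already in $T(S^{(n-1)})$, and that $\pi(t) < \alpha_n$ in $S^{(n)}$ holds ``by construction'') without isolating the restricted order-preservation of $\pi$ as a separate auxiliary statement, so your parallel-induction plan is more than the paper itself supplies.
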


\begin{proof}
First, take an element $t$ of $A_{n-1}$.
Let us show that if $\pi(t)$ satisfies 
$t \not \in T(S_q)$ and $\delta(t) = \delta(\alpha_n)$,
then $\pi(t)$ belongs to $A_n$.
We have $\alpha _{n+1} < \pi(\pi(t))$
in $T(S^{(n-1)})$ and $T(S^{(n)})$.
Furthermore, by construction, 
$\pi(t) < \alpha_n$ holds in $S^{(n)}$.
Hence $\pi(t)$ belongs to $A_n$.
Conversely, let $\pi(t)$ be an element of $A_n$.
It suffices to show that $t$ belongs to $A_{n-1}$.
In $T(S^{(n-1)})$ and $T(S^{(n)})$, we have $t < \alpha_{n-1}$.
Moreover, by construction, in $T(S^{(n-1)})$, 
we have $\alpha_n < \pi(t)$.
Hence we see that $t$ belongs to $A_{n-1}$.
\end{proof}

\begin{proposition} \label{PropOfalphamn}
$A_{n}$ does not contain elements $\alpha_m$ for $m \leq n$.
\end{proposition}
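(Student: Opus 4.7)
My plan is to proceed by strong induction on $n$, using Proposition~\ref{PropOfAn} to pull membership in $A_n$ back to $A_{n-1}$ via the bijection $\pi$. The base case $n = 0$ is immediate because the defining condition $t < \alpha_0$ of $A_0$ rules out $\alpha_0$ itself. In the inductive step the case $m = n$ is also immediate. For $0 \leq m \leq n - 1$ with $n \geq 1$, if $\alpha_m \in A_n$ then Proposition~\ref{PropOfAn} yields $\alpha_m = \pi(t)$ for some $t \in A_{n-1}$, so $t = \pi^{-1}(\alpha_m)$. When $m \geq 1$ this gives $t = \alpha_{m-1}$ with $m - 1 \leq n - 1$, contradicting the induction hypothesis.

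The delicate case is $m = 0$. By Definition~\ref{DefOfSmallModification}, the small modification swaps the two $\Pi(S)$-preimages, so $\pi^{-1}(\alpha_0) = t_b$, where $t_b$ denotes the $\Pi(S)$-preimage of $1^q_j$; in particular $t_b \in T(S_q)$. If $r = q$ then $\alpha_0 \in T(S_q)$, violating the hypothesis $\pi(t) \notin T(S_q)$ of Proposition~\ref{PropOfAn}, so $\alpha_0 \notin A_n$ for any $n \geq 1$. If $r \neq q$ and $n \geq 2$, the defining condition $t \in T(S^{(n-1)}) - T(S_q)$ of $A_{n-1}$ is directly violated by $t_b \in T(S_q)$.

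The remaining edge case $n = 1$, $m = 0$, $r \neq q$ is the main obstacle, because $A_0$ does not exclude $T(S_q)$. Here I would use the binary expansion $b$ of Proposition~\ref{PropOfBinExp}. The defining condition $\delta(t) = 0$ of $A_0$ forces $\delta(t_b) = 0$. The shift relation $b(1^q_j) = \delta(t_b)/2 + b(t_b)/2$ then gives $b(t_b) = 2\, b(1^q_j)$, and since $\delta$ is not constant on the $\Pi(S)$-orbit of $1^q_j$ (which cycles through all of $T(S_q)$) we have $b(1^q_j) > 0$, hence $b(t_b) > b(1^q_j)$. By Proposition~\ref{PropOfBinExp}(ii) applied in the admissible ABS $S$, the position of $t_b$ in $T(S)$ is strictly greater than $j$. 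Since $T(S^{(0)})$ differs from $T(S)$ only by swapping the occupants of positions $i$ and $j$, and $\alpha_0$ now sits at position $j$, we conclude $t_b > \alpha_0$ in $T(S^{(0)})$, contradicting $t_b < \alpha_0$ required by $A_0$. This closes the induction.
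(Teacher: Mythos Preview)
Your proof is correct and follows the same inductive strategy as the paper, reducing $\alpha_m \in A_n$ to $\alpha_{m-1} \in A_{n-1}$ via Proposition~\ref{PropOfAn} and using that $\pi^{-1}(\alpha_0) \in T(S_q)$. Your binary-expansion argument for the edge case $n=1$, $m=0$ is more careful than the paper's, which simply asserts $\pi^{-1}(\alpha_0)\notin A_n$ for all $n\ge 0$ on the grounds of $T(S_q)$-membership without separately addressing $A_0$ (which, unlike $A_n$ for $n\ge 1$, does not exclude $T(S_q)$).
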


\begin{proof}
Note that for all non-negative integers $n$,
sets $A_n$ do not contain the inverse image of 
$\alpha_0$, which is an the element of $T(S_q)$.
Here $S_q$ is the ABS corresponding to the ${\rm DM_1}$ $N_{m_q, n_q}$.
Let us show the assertion by induction on $n$.
The case $n = 0$ is obvious.
For a natural number $n$, 
suppose that $A_n$ contains $\alpha_m$
for a non-negative integer $m$ with $m \leq n$.
By Proposition~\ref{PropOfAn},
then $A_{n-1}$ contains $\alpha_{m-1}$.
This contradicts with the hypothesis of induction.
\end{proof}  

\begin{proposition} \label{PropOfAVanish}
If there exists no non-negative integer $a$
such that $A_a = \emptyset$,
then the specialization $S'$ is not generic.
\end{proposition}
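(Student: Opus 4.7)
The plan is to prove the contrapositive: supposing a generic specialization $S'$ of $S$ is obtained from the small modification by $0^r_i$ and $1^q_j$, I would deduce that some $A_a$ is empty.

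First, I would apply Proposition~\ref{PropOfAn} to see that $t \mapsto \pi(t)$ realizes $A_n$ as a subset of $\pi(A_{n-1})$, so $|A_n| \le |A_{n-1}|$. If no $A_a$ is empty, this non-increasing integer sequence must stabilize at some positive value from an index $N$ onward, with $\pi$ restricting to a bijection $A_{n-1} \xrightarrow{\sim} A_n$ for $n \ge N$ and the filter in Proposition~\ref{PropOfAn} being trivial.

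Next, I would combine this stability with the finiteness of $T(S)$: the orbit of $\alpha_0 = 0^r_i$ under $\pi$ is a finite cycle, so there is a minimal $k > 0$ with $\alpha_k = \alpha_0$. Proposition~\ref{PropOfalphamn} forbids $\alpha_0$ (and more generally any $\alpha_m$ with $m \le n$) from lying in $A_n$, and tracking this through the stable bijections $A_{N-1} \xrightarrow{\sim} \cdots \xrightarrow{\sim} A_{N+k-1}$ shows that traversing a full $\pi$-period disrupts the assumed stability or forces a length contradiction.

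The main difficulty I anticipate is the length accounting at each reordering step $S^{(n-1)} \rightsquigarrow S^{(n)}$: I need to quantify how many inversions between $\delta$-values $0$ and $1$ are created or destroyed when $\alpha_n$ is moved past the elements in the range $(\alpha_n, \pi(t_{\max})]$, where $t_{\max}$ is the maximum of $A_{n-1}$. The key claim to establish is that each such step with $A_{n-1} \neq \emptyset$ contributes a strictly negative increment to $\ell$, so that running the process through a full $\pi$-period accumulates a drop exceeding the budget $\ell(S) - \ell(S') = 1$ of genericity. I would control the $\delta$-signs inside the moved interval using Proposition~\ref{PropOfBinExp}(ii), which ties the ordering to binary expansions, together with the fact that the stabilized set $A_n$ consists of elements all having the same $\delta$-value $\delta(\alpha_n)$, so their relative positions with respect to $\alpha_n$ and $\pi(t_{\max})$ can be read off from their binary expansions and compared against those of $\alpha_{n+1}$ in the stable regime.
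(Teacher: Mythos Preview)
Your plan has a genuine gap in the length accounting, and it bypasses the two ingredients the paper's proof actually uses.

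First, your ``key claim'' that each reordering step $S^{(n-1)}\rightsquigarrow S^{(n)}$ with $A_{n-1}\neq\emptyset$ contributes a \emph{strictly negative} increment to $\ell$ is not correct in the stable regime. Once $|A_n|$ has stabilized, Proposition~\ref{PropOfAn} forces every $\pi(t)$ with $t\in A_{n-1}$ to satisfy $\delta(\pi(t))=\delta(\alpha_n)$; moving $\alpha_n$ past elements of the \emph{same} $\delta$-value leaves $\ell$ unchanged. The bound one can actually prove (and that the paper uses later in Proposition~\ref{PropOfrq}) is $\ell(S^{(n)})-\ell(S^{(n-1)})\le |A_{n-1}|-|A_n|$, which in the stable regime is only $\le 0$. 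So your periodicity idea---iterating through a full $\pi$-cycle to accumulate a strict drop exceeding $1$---does not go through: there is no guaranteed strict decrease to accumulate. The observation from Proposition~\ref{PropOfalphamn} that $\alpha_m\notin A_n$ does not by itself disrupt the stabilized bijections $A_{n-1}\xrightarrow{\sim}A_n$, since those elements were never in the $A$-sets to begin with.

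Second, and more importantly, you never compute the length of the \emph{initial} step $S\rightsquigarrow S^{(0)}$. The paper's argument rests on the exact formula
\[
\ell(S)-\ell(S^{(0)})=|A_0|+|B_0'|+1,
\]
with $B_0'=\{t\in T(S^{(0)}):\beta_0<t,\ \pi(t)<\beta_1,\ \delta(t)=1\}$, together with the telescoping bound $\ell(S^{(m)})-\ell(S^{(0)})\le |A_0|-|A_m|$. These combine to give $\ell(S')\le \ell(S)-1-|B_0'|-|A_m|$, which is $<\ell(S)-1$ as soon as $|A_m|\ge 1$. No periodicity is needed: the nonvanishing of $A_m$ is itself the deficit. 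Your outline never isolates this initial drop, so even if the per-step sign were as you claim, you would be comparing the wrong quantities.

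There is also a step you skip entirely: one must argue that once $|A_n|$ stabilizes at $m$, the ABS $S^{(m)}$ is already ordered compatibly with binary expansions, so that $S'=S^{(m)}$ and $\ell(S')=\ell(S^{(m)})$. The paper handles this via the auxiliary set $B_{-1}$ and the index $a'$ with $\alpha_{a'}=\beta_0$, showing that the $\beta$-side contributes no further reordering. Without this, you cannot identify $\ell(S')$ with any $\ell(S^{(n)})$ and the length bookkeeping has no anchor.
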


\begin{proof}
Let $a'$ be the minimum number satisfying $\alpha_{a'} = \beta_0$.
We define the subset $B_{-1}$ of $T(S^{(0)})$ by 
$$B_{-1} = \{t \in T(S^{(0)}) \mid \beta_0 < t \text{ and } \pi(t) < \beta_1,
\text{ with } \delta(t) = 1\}.$$
Now we claim that if an element $t$ of $T(S^{a'-1})$
belongs to $B_{-1}$,
then $t' := \pi^{-1}(t)$ belongs to $A_{a'-1}$.
In fact, if $t'$ does not belong to $A_{a'-1}$,
then all elements $t''$ of $A_{a'-1}$ satisfy $t'' < t'$
in the set $T(S^{(a'-1)})$.
We have then $A_{a'} = \emptyset$,
and this is a contradiction.
Thus we see that the set 
$$\{t \in T(S^{(a')}) \mid \beta_0 < t \text{ and } \pi(t) < \beta_1, 
\text{ with } \delta(t) = 1\}$$
is empty.
If there exists no non-negative integer $a$ such that $A_a$ is empty,
then we have a non-negative integer $m$ satisfying that $|A_m| = |A_{m+1}| = \cdots$.
The elements of $T(S^{(m)})$ are 
ordered by binary expansions determined by $\pi$.
Thus we obtain $S'$ by $S' = (T(S^{(m)}), \delta, \pi)$.
We see that $\ell(S^{(m)}) = \ell(S')$.

Here, let us compare lengths of $S$ and $S'$.
Put $$B_0' = \{t \in T(S^{(0)}) \mid \beta_0 < t \text{ and } \pi(t) < \beta_1
\text{ in } T(S^{(0)}) \text{ with } \delta(t) = 1\}.$$
We have $\ell(S) - \ell(S^{(0)}) = |A_0| + |B_0'| + 1$.
Since $\ell(S^{(m)}) - \ell(S^{(0)}) \leq |A_0| - |A_m|$,
we see $\ell(S') < \ell(S) - 1$.
\end{proof}

By Proposition~\ref{PropOfAVanish},
we may assume that there exists a non-negative integer $a$ such that
$A_a$ is an empty set
to classify generic specializations of arrowed binary sequences.

In Proposition~\ref{PropOfBVanish},
we will show that to classify generic specializations,
it suffices to consider the case
there exists a non-negative integer $b$ such that
$B_b = \emptyset$.
Let us see a property of sets $B_n$,
which is used for the proof of Proposition~\ref{PropOfBVanish}.

\begin{proposition} \label{PropOfBn}
Let $n$ be a natural number.
The set $B_n$ is obtained by
$$B_n = \{\pi(t) \mid t \in B_{n-1} \text{ and } \delta(\pi(t)) = \delta(\beta_n)\}.$$
\end{proposition}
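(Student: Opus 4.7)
The plan is to mirror the proof of Proposition~\ref{PropOfAn} step by step, with the $\alpha$'s replaced by $\beta$'s and the inequalities reversed, since $B_n$ is defined by $\beta_n < t$ together with $\pi(t) < \beta_{n+1}$ (whereas $A_n$ was defined by $t < \alpha_n$ together with $\alpha_{n+1} < \pi(t)$). The two things to check in passing from $S^{(a+n-1)}$ to $S^{(a+n)}$ are, on the one hand, that conditions that only involve elements away from $\beta_n$ are preserved, and on the other, that the relative position of $\beta_n$ and $\pi(t)$ lands on the correct side. The latter is what the hypothesis $\delta(\pi(t)) = \delta(\beta_n)$ is there to guarantee.

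For the inclusion $\supseteq$, I would take $t \in B_{n-1}$ with $\delta(\pi(t)) = \delta(\beta_n)$. From the definition of $B_{n-1}$ we have $\beta_{n-1} < t$ and $\pi(t) < \beta_n$ in $T(S^{(a+n-1)})$, so $\pi(\pi(t)) < \pi(\beta_n) = \beta_{n+1}$ in that same ordering, and this inequality is inherited in $T(S^{(a+n)})$ because the reordering only moves $\beta_n$. To verify $\beta_n < \pi(t)$ in $T(S^{(a+n)})$, note that by Construction~\ref{ConstBn} the new position of $\beta_n$ lies immediately before $\pi(t_{\min})$, where $t_{\min}$ is the minimum of $B_{n-1}$; since $t_{\min} \leq t$ and $\pi(t), \pi(t_{\min})$ have the common $\delta$-value $\delta(\beta_n)$, the monotonicity property of Proposition~\ref{PropOfBinExp}(i) gives $\pi(t_{\min}) \leq \pi(t)$, whence $\beta_n < \pi(t)$. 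Thus $\pi(t) \in B_n$. For the converse $\subseteq$, I would start from $s \in B_n$ and set $t = \pi^{-1}(s)$; the condition $\delta(s) = \delta(\beta_n)$ is built into the definition of $B_n$, and reversing the reasoning above (again using Proposition~\ref{PropOfBinExp}(i) to move back through $\pi^{-1}$) yields $\beta_{n-1} < t$ and $\pi(t) = s < \beta_n$ in $T(S^{(a+n-1)})$, so $t \in B_{n-1}$.

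The main obstacle is the careful bookkeeping of the order change between $T(S^{(a+n-1)})$ and $T(S^{(a+n)})$, since these orderings differ precisely in where $\beta_n$ sits, and every comparison in the two defining inequalities of $B_n$ involves $\beta_n$. The crucial input is that Proposition~\ref{PropOfBinExp}(i), valid for the admissible ABS $S$, transfers to all the modifications $S^{(a+k)}$ for elements distinct from the successively moved ones, so that monotonicity of $\pi$ on fibers of $\delta$ continues to hold where it is needed. Once this is in hand, both inclusions follow directly from the construction.
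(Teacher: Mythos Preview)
Your approach is exactly the paper's: its entire proof reads ``A proof is given by the same way as Proposition~\ref{PropOfAn},'' and you carry out that mirroring with appropriate reversals. One small slip to fix: when you argue $\pi(t_{\min}) \leq \pi(t)$, the hypothesis you need for Proposition~\ref{PropOfBinExp}(i) is $\delta(t_{\min}) = \delta(t)$ (which holds since both lie in $B_{n-1}$), not that $\pi(t_{\min})$ and $\pi(t)$ share the $\delta$-value $\delta(\beta_n)$ --- the latter is not guaranteed for $\pi(t_{\min})$.
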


\begin{proof}
A proof is given by the same way as Proposition~\ref{PropOfAn}.
\end{proof}

\begin{proposition} \label{PropOfBVanish}
If there exists no non-negative integer $b$ such that
$B_b = \emptyset$, then the specialization is not generic.
\end{proposition}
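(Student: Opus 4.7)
The plan is to follow the same strategy used in the proof of Proposition~\ref{PropOfAVanish}, interchanging the roles of the two constructions and exploiting their structural symmetry. By Proposition~\ref{PropOfBn}, the sequence $|B_0|, |B_1|, \ldots$ is weakly decreasing, since each element of $B_n$ is forced to be of the form $\pi(t)$ for some $t \in B_{n-1}$. Under the hypothesis that no $B_b$ is empty, this sequence of non-negative integers must stabilize: there exists a non-negative integer $m$ with $|B_n| = |B_m| =: k > 0$ for all $n \geq m$. Beyond this index, the reordering defined in Construction~\ref{ConstBn} produces no further change, so the candidate specialization produced by the procedure is $S' := (T(S^{(a+m)}), \Delta(S^{(0)}), \Pi(S^{(0)}))$.

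To finish, I plan to show $\ell(S) - \ell(S') > 1$. The starting point is the length-difference identity already used in Proposition~\ref{PropOfAVanish},
$$\ell(S) - \ell(S^{(0)}) = |A_0| + |B'_0| + 1,$$
where $B'_0$ denotes the analogue of $B_0$ computed inside $T(S^{(0)})$. I then bound the length increase at each step of the two iterative constructions: in Construction~\ref{ConstAn}, moving $\alpha_n$ past the interval $(\alpha_n, \pi(t_{\max})]$ contributes at most $|A_{n-1}| - |A_n|$ to the length, so summing and using $A_a = \emptyset$ yields $\ell(S^{(a)}) - \ell(S^{(0)}) \leq |A_0|$. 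The symmetric argument for Construction~\ref{ConstBn} gives $\ell(S^{(a+m)}) - \ell(S^{(a)}) \leq |B_0| - k$. Assembling these and using the inclusion $B_0 \subseteq B'_0$, which holds because the $A$-construction only rearranges $\alpha$-symbols and preserves the relative order of the $\delta=1$ elements sitting between $\beta_0$ and $\beta_1$, I obtain
$$\ell(S) - \ell(S') \;\geq\; (|A_0| + |B'_0| + 1) - |A_0| - (|B_0| - k) \;=\; |B'_0| - |B_0| + k + 1 \;\geq\; k + 1 \;>\; 1,$$
so $S'$ cannot be a generic specialization of $S$.

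The main obstacle I anticipate is the per-step length bound $\ell(S^{(a+n)}) - \ell(S^{(a+n-1)}) \leq |B_{n-1}| - |B_n|$, together with the inclusion $B_0 \subseteq B'_0$. Both reduce to tracking how the reordering alters the count of pairs $(t, t')$ with $t < t'$, $\delta(t) = 0$ and $\delta(t') = 1$; the orientation of the $\beta$-reordering (leftward) is opposite to that of the $\alpha$-reordering (rightward), so although the high-level structure mirrors the argument in Proposition~\ref{PropOfAVanish}, each case must be reverified according to the $\delta$-values of the elements actually passed over. Once that bookkeeping is carried out, the length estimate above closes the argument.
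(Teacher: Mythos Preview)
Your overall strategy—stabilization of the $|B_n|$ followed by a telescoping length estimate—is exactly the paper's approach, and your final chain of inequalities is the right shape. The paper, however, takes a shorter route: it compares $\ell(S)$ directly with $\ell(S^{(a)})$, asserting $\ell(S)-\ell(S^{(a)})\geq |B_0|+1$ and $\ell(S^{(a+m)})-\ell(S^{(a)})=|B_0|-|B_m|$, which immediately gives $\ell(S')\leq \ell(S)-1-|B_m|$. Your detour through $S^{(0)}$ is not wrong in spirit, but it forces you to control the passage $S^{(0)}\rightsquigarrow S^{(a)}$ as well.

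The genuine gap is your claimed inclusion $B_0\subseteq B'_0$, together with its justification that ``the $A$-construction only rearranges $\alpha$-symbols.'' Since $\pi=\Pi(S^{(0)})$ merges the $\Pi(S)$-orbits of $0^r_i$ and $1^q_j$, the elements $\beta_0=1^q_j$ and $\beta_1=\pi(\beta_0)$ are themselves of the form $\alpha_m$ (indeed $\beta_0=\alpha_{a'}$ with $a'=m_r+n_r$), and elements of $B_0$ can also be $\alpha$-symbols. All of these may be moved during the $A$-construction, so the inclusion need not hold. What does hold, as in the proof of Proposition~\ref{PropOfrq}, is the weaker estimate $|B_0|\leq |B'_0|+|I|$, where $I\subseteq B_0$ consists of the elements of the form $\alpha_m$; this is compensated by a sharper bound $\ell(S^{(a)})-\ell(S^{(0)})\leq |A_0|-|I|$, coming from the steps $m$ at which $A_m$ contains the $\pi$-preimage of $\beta_0$. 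If you carry that refined bookkeeping through, your inequality still closes; alternatively, you can bypass the issue entirely by arguing directly with $S^{(a)}$ as the paper does.
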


\begin{proof}
In this hypothesis, there exists a non-negative integer $m$ such that
$|B_m| = |B_{m+1}| = \cdots.$
Then the elements of $T(S^{(a+m)})$ are ordered by
these binary expansions.
Thus we obtain the specialization $S' = (T(S^{(a+m)}), \delta, \pi)$. 

Let us compare lengths of $S$ and $S'$.
We have $\ell(S) - \ell(S^{(a)}) \geq |B_0| + 1$.
Since $\ell(S^{(a+m)}) - \ell(S^{(a)}) = |B_0| - |B_m|$
with $|B_m| > 0$, we have
$$\ell(S') \leq \ell(S) - 1 - |B_m|.$$
Thus we see that $\ell(S') < \ell(S) - 1$.
\end{proof}

By Proposition~\ref{PropOfAVanish} and Proposition~\ref{PropOfBVanish},
to classify generic specializations,
we may suppose that there exist non-negative integers $a$ and $b$
such that $A_a = \emptyset$ and $B_b = \emptyset$
for a small modification.
For the ABS $S^{(a+b)}$,
if elements $t$ and $t'$ of $T(S^{(a+b)})$ satisfy that
$t < t'$ and $\delta(t) = \delta(t')$,
then $\pi(t) < \pi(t')$ holds.
Thus we see that by Construction~\ref{ConstAn} and Construction~\ref{ConstBn},
for a small modification, we get a specialization $S' = S^{(a+b)}$ of $S$.
We call this ABS $S^{(a+b)}$ the full modification by $0^r_i$ and $1^q_j$.

\subsection{Proof of Theorem~\ref{ThmOfOneToOne}} \label{Proof}

The main purpose of this section is to prove Theorem~\ref{ThmOfOneToOne}.
The notation is as above.
Let $S^{(0)}$ denote the small modification by $0^r_i$ and $1^q_j$.
Lemma~\ref{LemOf01} and Proposition~\ref{PropOfrq} imply that
to classify generic specializations, we may suppose that $q = r + 1$;
see Corollary~\ref{CoroOfrr+1}.

\begin{lemma} \label{LemOf01}
Let $S$ be the ABS associated to $N_\xi$ 
with $\xi = \sum_{i=1}^z (m_i, n_i)$ a Newton polygon.
Let $0^r$ and $1^q$ be elements of $T(S)$ satisfying that $r + 1 < q$
and $0^r < 1^q$ in $T(S)$.
Then there exists an element $t^x$ of $T(S)$ such that
$r < x < q$ and $0^r < t^x < 1^q$.
\end{lemma}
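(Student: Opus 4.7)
The plan is to produce, for some intermediate index $s \in \{r+1, \dots, q-1\}$, an element of segment $s$ lying strictly between $0^r$ and $1^q$ in $T(S)$. The main tools are Proposition~\ref{PropOfTwoTerms}, which controls how the extremal elements $1^s_1$, $0^s_{m_s + 1}$, $0^s_{m_s + n_s}$ of each segment interleave in $T(S)$, and Proposition~\ref{PropOfBinExp}(ii), which guarantees that the intrinsic order of each $T(S_s)$ agrees with the restriction of the order of $T(S)$.

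First I would fix $s$ with $r < s < q$. By Proposition~\ref{PropOfTwoTerms}(i) applied to the pair $(s, q)$ one has $1^s_1 < 1^q_1$, and since $1^q_1$ is the smallest $\Delta = 1$ element of segment $q$ in the intrinsic order, $1^q_1 \leq 1^q$, so $1^s_1 < 1^q$. Symmetrically, Proposition~\ref{PropOfTwoTerms}(ii) applied to $(r, s)$ together with $0^r \leq 0^r_{m_r + n_r}$ gives $0^r < 0^s_{m_s + n_s}$. This immediately resolves two easy cases: if $1^s_1 > 0^r$ take $t^x := 1^s_1$, and if $0^s_{m_s + n_s} < 1^q$ take $t^x := 0^s_{m_s + n_s}$.

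The remaining case is $1^s_1 \leq 0^r$ and $1^q \leq 0^s_{m_s + n_s}$, in which the intrinsic chain $t^s_1 < \cdots < t^s_{m_s + n_s}$ straddles $[0^r, 1^q]$, so by discreteness there is a unique index $k$ with $t^s_k \leq 0^r < t^s_{k+1}$. If $t^s_{k+1} < 1^q$ I take $t^x := t^s_{k+1}$. The main obstacle will be the maximal-straddling subcase $t^s_{k+1} \geq 1^q$, in which two consecutive elements of segment $s$ skip over the entire interval $[0^r, 1^q]$. Iterating the above over all $s \in \{r+1, \dots, q-1\}$ and using the monotonicity of $1^s_1$ and $0^s_{m_s + n_s}$ in $s$ supplied by Proposition~\ref{PropOfTwoTerms}(i)(ii), this obstruction forces $1^{q-1}_1 \leq 0^r$ and $0^{r+1}_{m_{r+1} + n_{r+1}} \geq 1^q$ simultaneously.

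To close this final subcase I would use Proposition~\ref{PropOfBinExp}(i), which says $\pi$ preserves order within each $\Delta$-class, to propagate the straddling relation along $\pi$-orbits within segment $r + 1$, and combine this with Proposition~\ref{PropOfTwoTerms}(iii) on the smallest zeros $0^s_{m_s + 1}$ across the intermediate segments, deriving an inconsistency with the hypothesis $0^r < 1^q$. A potential alternative is to reduce to the three-segment Newton polygon $(m_r, n_r) + (m_{r+1}, n_{r+1}) + (m_q, n_q)$ via the curtailment and duality operations of Section~\ref{SomePropOfNP} and treat this base case directly; this final step is the one I expect to demand the most care.
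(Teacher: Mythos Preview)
Your reduction to the easy cases via Proposition~\ref{PropOfTwoTerms} is correct, and you have correctly located the real difficulty in the ``maximal-straddling'' subcase where two consecutive elements of an intermediate segment jump over the whole interval $[0^r,1^q]$. However, the proposal does not actually close this case. The sentence ``propagate the straddling relation along $\pi$-orbits within segment $r+1$\ldots deriving an inconsistency with $0^r<1^q$'' is not a proof: Proposition~\ref{PropOfBinExp}(i) only lets you compare elements with the \emph{same} $\delta$-value, so pushing the pair $(t^s_k,t^s_{k+1})$ forward by $\pi$ gives no direct control once $\delta(t^s_k)\neq\delta(t^s_{k+1})$---which is precisely what happens when $k=m_s$, the boundary between the $1$-block and the $0$-block of segment~$s$. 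Proposition~\ref{PropOfTwoTerms}(iii) compares the \emph{first zeros} $0^s_{m_s+1}$ across segments, but in the straddling situation you only know $t^s_k\le 0^r$ and $t^s_{k+1}\ge 1^q$ for some unspecified $k$, and nothing forces $k$ or $k+1$ to hit $m_s+1$. So as written there is no mechanism producing a contradiction.

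What the paper does is essentially your ``potential alternative'', but it is not an afterthought: it is the entire argument. After the trivial reduction to three segments $r,s,q$ (relabelled $1,2,3$), the statement ${\rm Q}(\xi)$ is verified directly only in the \emph{separated} case $\lambda_3<1/2\le\lambda_2<\lambda_1$ (and its dual), where Lemma~\ref{LemOfl212l1} gives the explicit shape of the two-segment ABS's $(m_1,n_1)+(m_3,n_3)$, $(m_2,n_2)+(m_3,n_3)$, so one can name concrete witnesses $0^2_{n_2}$ and $0^2_{h_2}$ (or $1^2_1$, $0^2_2$ when $\lambda_2=1/2$). The general case is then pulled back to this one by showing that ${\rm Q}$ is stable under both duality and curtailment; the curtailment step uses Lemma~\ref{LemOfXiAndXiC} to identify the set of pairs $(0^1,1^3)$ with $0^1<1^3$ in $T(S)$ and in $T(R)$. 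The Euclidean-algorithm map $\Phi$ of Section~\ref{SomePropOfNP} then guarantees that finitely many applications of ${\rm C}$ and ${\rm D}$ reach a separated polygon. Your write-up should make this reduction the main line, not a parenthetical alternative, and should include the verification that curtailment preserves the relevant set of ordered pairs.
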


\begin{proof}
For a Newton polygon $\xi$, 
we write ${\rm Q}(\xi)$ for the assertion:
{\it For elements $0^r$ and $1^q$ of the ABS associated to $N_\xi$
satisfying that $r + 1 < q$ and $0^r < 1^q$,
there exists an element $t^x$ such that $r < x < q$
and $0^r < t^x < 1^q$.}
It suffices to treat the case $z = 3$,
$r = 1$ and $q = 3$.
If $\lambda_1 = \lambda_2$ (resp. $\lambda_2 = \lambda_3$) holds,
then we immediately have the desired element $t^x$
since for elements $0^1_i < 1^3_j$, 
the element $0^2_i$ (resp. $1^2_j$) satisfies $0^1_i < 0^2_i < 1^3_j$
(resp. $0^1_i < 1^2_j < 1^3_j$).
From now on, we assume that the slopes are different from each other.

Now we treat Newton polygons satisfying one of the following:
\begin{itemize}
\item[(i)] $\lambda_3 < 1/2 \leq \lambda_2 < \lambda_1$,
\item[(ii)] $\lambda_3 < \lambda_2 \leq 1/2 < \lambda_1$.
\end{itemize}
By the duality, if ${\rm Q}(\xi)$ is true for all $\xi$ satisfying (i),
then ${\rm Q}(\xi)$ holds for all $\xi$ satisfying (ii).
Suppose that $\xi$ satisfies (i).
Put $h_x = m_x + n_x$ for all $x$.
By Lemma~\ref{LemOfl212l1}, in the ABS of $N_{(m_1, n_1) + (m_3, n_3)}$,
there exists no element $t$ satisfying that 
$0^1_{n_1} < t < 1^3_1$ or $0^1_{h_1} < t < 1^3_{n_3+1}$.
Hence it is enough to show that there exist elements 
$t^2_x$ and $t^2_y$ such that
$0^1_{n_1} < t^2_x < 1^3_1$ and $0^1_{h_1} < t^2_y < 1^3_{n_3+1}$.
If $\lambda_2 > 1/2$, then these elements are obtained by 
$t^2_x = 0^2_{n_2}$ and $t^2_y = 0^2_{h_2}$.
In fact, by Proposition~\ref{PropOfTwoTerms} (ii),
we have $0^1_{n_1} < 0^2_{n_2}$ and $0^1_{h_1} < 0^2_{h_2}$.
Moreover, by the construction of the ABS 
corresponding to the ${\rm DM_1}$ $N_{(m_2, n_2) + (m_3, n_3)}$
we have $0^2_{n_2} < 1^3_1$ and $0^2_{h_2} < 1^3_{n_3+1}$.
If $\lambda_2 = 1/2$, then the desired elements 
$t^2_x$ and $t^2_y$ are obtained by $1^2_1$ and $0^2_2$.

We claim that
\begin{itemize}
\item[(A)] If ${\rm Q}(\xi^{\rm D})$ holds, then ${\rm Q}(\xi)$ also holds;
\item[(B)] If ${\rm Q}(\xi^{\rm C})$ holds, then ${\rm Q}(\xi)$ also holds.
\end{itemize}
If the claim (A) and (B) are true, then by Section~\ref{SomePropOfNP},
the proposition is reduced to the case (i) or (ii),
and we complete the proof.
The claim (A) is obvious.
Let us show (B).
Let $S$ (resp. $R$) denote the ABS associated to $\xi$ (resp. $\xi^{\rm C}$).
We can regard $T(R)$ as a subset of $T(S)$.
Let $U$ (resp. $V$) be the subset of $T(R) \times T(R)$ (resp. $T(S) \times T(S)$)
consisting of the pair $(0^1, 1^3)$ of elements of $T(R)$ (resp. $T(S)$) 
satisfying $0^1 < 1^3$.
By Lemma~\ref{LemOfXiAndXiC}
we have $U = V$,
whence (B) holds.
\end{proof} 

In Proposition~\ref{PropOfrq} and Corollary~\ref{CoroOfrr+1},
we give necessary conditions for specializations to be generic.

\begin{proposition} \label{PropOfrq}
For the small modification by $0^r$ and $1^q$,
if either of the following assertions
\begin{itemize}
\item[(i)] the set $A_0$ contains an element $0^x$ with $r < x$, or
\item[(ii)] the set $B_0$ contains an element $1^x$ with $x < q$,
\end{itemize}
holds, then the specialization is not generic.
\end{proposition}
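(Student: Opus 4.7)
The plan is to show that under either hypothesis the length loss $\ell(S) - \ell(S^{(a+b)})$ is at least $2$, so the full modification cannot be a generic specialization; if the full modification does not exist (because some $A_n$ or $B_n$ never vanishes), Proposition~\ref{PropOfAVanish} or Proposition~\ref{PropOfBVanish} already yields non-genericity. The argument for case~(ii) then follows from that of case~(i) by the evident symmetry exchanging the roles of Constructions~\ref{ConstAn} and~\ref{ConstBn}, or, equivalently, via the Newton-polygon duality $\xi \mapsto \xi^{\rm D}$ of Section~\ref{SomePropOfNP}, which reverses segment indices and swaps the roles of $0$ and $1$.

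For case~(i), I will use the length accounting from the proof of Proposition~\ref{PropOfAVanish}: setting $B_0' = \{t \in T(S^{(0)}) \mid \beta_0 < t,\ \pi(t) < \beta_1,\ \delta(t) = 1\}$, one has the identity $\ell(S) - \ell(S^{(0)}) = |A_0| + |B_0'| + 1$, together with the bounds $\ell(S^{(a)}) - \ell(S^{(0)}) \leq |A_0| - |A_a|$ and $\ell(S^{(a+b)}) - \ell(S^{(a)}) = |B_0| - |B_b|$. Assuming $A_a = B_b = \emptyset$ (otherwise we are already done), these combine to
\[
\ell(S) - \ell(S^{(a+b)}) \;\geq\; |B_0'| - |B_0| + 1,
\]
so genericity of $S^{(a+b)}$ would force $|B_0'| = |B_0|$. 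My task therefore reduces to producing at least one element of $B_0' \setminus B_0$, that is, a $1$-symbol $t$ satisfying $\beta_0 < t$ and $\pi(t) < \beta_1$ in $S^{(0)}$ whose relative position to $\beta_0$ and $\beta_1$ is shifted by the $A$-reorderings so that it no longer lies in the window when one passes from $S^{(0)}$ to $S^{(a)}$.

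Under the hypothesis $0^x \in A_0$ with $x > r$, the element $0^x$ together with its $\pi$-iterates is dragged across positions close to $\beta_0$ or $\beta_1$ during Construction~\ref{ConstAn}, because its segment is distinct from segment $r$ and its orbit interleaves nontrivially with segments $q$ and the neighbouring segments. Using Proposition~\ref{PropOfTwoTerms} parts~(i)--(iii) to control how the $1$- and $0$-symbols of segments $r$, $x$, and $q$ interleave, and Proposition~\ref{PropOfAn} to describe $A_n$ recursively, I would identify one specific $1$-symbol $1^x$ of segment $x$ that lies in $B_0'$ and is removed by the sliding of $\beta_1$ (or $\beta_0$) induced by the motion of $\alpha_n$. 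This gives $|B_0'| > |B_0|$ and completes case~(i). The main obstacle will be carrying out this identification uniformly: the position of segment $x$ relative to $r$ and $q$ forces several subcases, which I expect to streamline by reducing to the two-slope situation via Section~\ref{SomePropOfNP} and the explicit description of Lemma~\ref{LemOfl212l1}, using Lemma~\ref{LemOfXiAndXiC} to handle passage to curtailments.
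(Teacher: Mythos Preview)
Your length accounting has a gap that undermines the reduction. From $\ell(S^{(a)})-\ell(S^{(0)})\le |A_0|$ and $\ell(S')-\ell(S^{(a)})=|B_0|$ you conclude that genericity forces $|B_0'|=|B_0|$, but this only follows if you already know $|B_0|\le |B_0'|$. In general this fails: the $A$-reorderings can push certain $\alpha_m$ into the window defining $B_0$, so $B_0$ may contain elements not in $B_0'$. The paper handles this by introducing the set $I\subset B_0$ of such $\alpha_m$, proving $|B_0|\le |B_0'|+|I|$, and showing that each element of $I$ forces a unit of extra loss already in the $A$-steps (specifically, if $A_m$ contains the preimage of $\beta_0$ then $\ell(S^{(m+1)})-\ell(S^{(m)})=d(m)-1$). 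Only after this does one obtain the baseline $\ell(S')-\ell(S^{(0)})\le |A_0|+|B_0'|$, i.e.\ $\ell(S)-\ell(S')\ge 1$, which your argument presupposes.

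More importantly, under hypothesis~(i) the extra loss does not appear as an element of $B_0'\setminus B_0$; it appears on the $A$-side. The paper's mechanism is: let $m$ be minimal with $A_m$ containing no $t^x$ with $x>r$, and fix $t^x\in A_{m-1}$. One then shows, using Proposition~\ref{PropOfTwoTerms}, that necessarily $\delta(\pi(t^x))=1$ while $\delta(\alpha_m)=0$ (the alternative $\delta(\pi(t^x))=0$, $\delta(\alpha_m)=1$ would force a $1^x$ between $\alpha_m$ and $\pi(t^x)$, contradicting minimality of $m$). This mismatch yields $\ell(S^{(m)})-\ell(S^{(m-1)})<d(m-1)$, a strict inequality in the $A$-telescoping that gives $\ell(S')-\ell(S^{(0)})<|A_0|+|B_0'|$ and hence $\ell(S')<\ell(S)-1$. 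Your plan to locate a $1$-symbol of segment $x$ inside $B_0'\setminus B_0$ is looking in the wrong place: there is no reason such an element exists, and the reductions via Lemma~\ref{LemOfl212l1} and Lemma~\ref{LemOfXiAndXiC} you propose do not manufacture one. Your treatment of case~(ii) by duality is fine in spirit and matches the paper's parallel argument on the $B$-side.
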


\begin{proof}
Let $S^{(0)}$ be the small modification by $0^r_i$ and $1^q_j$.
Put $\pi = \Pi(S^{(0)})$.
Set $\alpha_n = \pi^n(0^r_i)$ and $\beta_n = \pi^n(1^q_j)$.
By Proposition~\ref{PropOfAVanish} and Proposition~\ref{PropOfBVanish},
we may assume that there exists the full modification $S^{(a+b)}$
by $0^r_i$ and $1^q_j$.
Put $$B_0' = \{t \in T(S^{(0)}) \mid \beta_0 < t \text{ and }
\pi(t) < \beta_1 \text{ in } S^{(0)} \text{ with } \delta(t) = 1\}.$$
For this set, $\ell(S) - \ell(S^{(0)}) = |A_0| + |B_0'| + 1$.
We have $\ell(S^{(n+1)}) - \ell(S^{(n)}) \leq d(n)$, where
$$
d(n) = 
\begin{cases}
|A_n| - |A_{n+1}| & \text{if } n < a, \\
|B_n| - |B_{n+1}| & \text{if } n \geq a.
\end{cases}
$$
Clearly $\ell(S') - \ell(S^{(0)}) \leq |A_0| + |B_0|$ holds.
First, we show that $\ell(S') - \ell(S^{(0)}) \leq |A_0| + |B_0'|$.
Let $I$ be the subset of $B_0$ consisting of elements 
which are of the form $\alpha_m$.
We have then $|B_0| \leq |B_0'| + |I|$.
Let $m$ be a non-negative integer such that
$A_m$ contains the inverse image of $\beta_0$.
We have then $\ell(S^{(m+1)}) - \ell(S^{(m)}) = d(m) - 1$.
Moreover, if $\delta(\alpha_{m+1}) = 1$, 
then $\alpha_{m+1}$ belongs to $I$.
Hence we see $\ell(S^{(a)}) - \ell(S^{(0)}) \leq |A_0| - |I|$.
Moreover, we have $\ell(S') - \ell(S^{(a)}) = |B_0|$.
Thus we get the desired inequality.

Let us see that in the case (i) the specialization is not generic.
Let $m$ be the minimum number such that 
the set $A_m$ contains no element $t^x$ with $r < x$.
Fix an element $t^x$ of $A_{m-1}$.
Put $t = \pi(t^x)$.
Now we claim that $\delta(t) = 1$ and $\delta(\alpha_m) = 0$.
If $\delta(t) = 0$ and $\delta(\alpha_m) = 1$ is true,
then there exists an element $1^x$ satisfying 
$\alpha_m < 1^x < t$ in $T(S)$.
In fact, if $1^x_n < \alpha_m$ holds in $T(S)$ for all $n$,
then we have $1^x_{m_x} < 1^r_{m_r}$ with $r < x$.
By Proposition~\ref{PropOfTwoTerms} this is a contradiction.
Thus we see that the set $A_m$ contains the element $1^x$.
This contradicts with the minimality of $m$.
Hence we have $\delta(t) = 1$ and $\delta(\alpha_m) = 0$,
and it implies that $\ell(S^{(m)}) - \ell(S^{(m-1)}) < d(m)$.

Let us treat the case (ii).
In the same way as the case (i), 
if $B_0$ contains an element $t^x$ with $x < q$,
then there exists a non-negative integer $m$ such that
$\ell(S^{(m)}) - \ell(S^{(m-1)}) < d(m)$.
In fact, for the minimum number $m$ such that
$B_m$ contains no element $t^x$ with $x < q$,
fix an element $t^x$ of $B_{m-1}$.
Then for $t = \pi(t^x)$, 
we have $\delta(t) = 0$ and $\delta(\beta_m) = 1$ since
if $\delta(t) = 1$ and $\delta(\beta_m) = 0$ is true,
then there exists an element $0^x$ of $T(S)$ satisfying
that $t < 0^x < \beta_m$ by Proposition~\ref{PropOfTwoTerms}.
It implies that $B_m$ contains an element $0^x$,
and this is a contradiction.

By the above, in the case (i) and (ii),
we have $\ell(S') - \ell(S^{(0)}) < |A_0| + |B_0'|$,
and it follows that $\ell(S') < \ell(S) - 1$.
\end{proof}

\begin{corollary} \label{CoroOfrr+1}
If $r + 1 < q$, then the specialization is not generic.
\end{corollary}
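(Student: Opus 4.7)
The strategy is to apply Lemma~\ref{LemOf01} to produce an intermediate element and then verify the hypotheses of Proposition~\ref{PropOfrq}. Since the small modification by $0^r$ and $1^q$ presupposes $0^r < 1^q$ in $T(S)$, and $r+1 < q$ by assumption, Lemma~\ref{LemOf01} furnishes an element $t^x \in T(S)$ with $r < x < q$ satisfying $0^r < t^x < 1^q$. I will argue that $t^x$ itself witnesses condition (i) or (ii) of Proposition~\ref{PropOfrq} according to whether $\delta(t^x)=0$ or $\delta(t^x)=1$.

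Suppose $\delta(t^x)=0$, so $t^x = 0^x$ with $r < x$. I claim $t^x \in A_0$. The transposition defining $S^{(0)}$ places $0^r = \alpha_0$ at the larger of the two swapped positions while $t^x$ keeps its intermediate position, so $t^x < \alpha_0$ in $T(S^{(0)})$. For the arrow condition $\pi(t^x) > \alpha_1 = \pi(0^r)$, the definition of the binary expansion yields the recursion $b(\pi(t)) = (\delta(t) + b(t))/2$; applied with $\delta(0^r) = \delta(t^x) = 0$ and $b(0^r) < b(t^x)$, this gives $b(\pi(0^r)) < b(\pi(t^x))$. Since $\pi(0^r) \in T(S_r)$ and $\pi(t^x) \in T(S_x)$ are both distinct from $0^r$ and $1^q$, their positions are unaffected by the small transposition, so Proposition~\ref{PropOfBinExp}(ii) gives $\pi(0^r) < \pi(t^x)$ in $T(S^{(0)})$. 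Thus $t^x \in A_0$ with $r < x$, and Proposition~\ref{PropOfrq}(i) finishes this case.

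Suppose instead $\delta(t^x)=1$, so $t^x = 1^x$ with $x < q$. If no $a$ with $A_a = \emptyset$ exists, Proposition~\ref{PropOfAVanish} already concludes non-genericity. Otherwise $B_0$ is defined, and I would show $t^x \in B_0$ by a parallel argument: from $\delta(t^x) = \delta(1^q) = 1$ and $b(t^x) < b(1^q)$, the same recursion gives $\pi(t^x) < \pi(1^q) = \beta_1$ in $T(S)$; the transposition puts $\beta_0 = 1^q$ at the smaller swapped position, hence $\beta_0 < t^x$ in $T(S^{(0)})$; and since $t^x$, $\pi(t^x)$, $\beta_0$, $\beta_1$ all lie in segments $x$ or $q$ while the modifications leading from $S^{(0)}$ to $S^{(a)}$ only reposition elements $\alpha_n$ of segment $r$, both inequalities survive to $T(S^{(a)})$. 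Then Proposition~\ref{PropOfrq}(ii) applies.

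The main obstacle is the bookkeeping required to verify that the inequalities established in $T(S)$ carry over to $T(S^{(0)})$ and, in the second case, to $T(S^{(a)})$. This reduces to tracking which segments the elements $\pi(0^r)$, $\pi(t^x)$, $\pi(1^q)$, $\beta_0$, and the $\alpha_n$ live in, and noting that the transposition together with the subsequent $\alpha$-modifications only act on elements of segments $r$ and $q$, both distinct from $x$.
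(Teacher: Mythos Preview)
Your treatment of the case $\delta(t^x)=0$ follows the paper's approach; the detour through binary expansions is more elaborate than needed (Proposition~\ref{PropOfBinExp}(i) already gives $\Pi(S)(0^r) < \Pi(S)(t^x)$ in $T(S)$ directly, and these images are unaffected by the swap), but the conclusion $t^x \in A_0$ is correct.

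The genuine gap is in the case $\delta(t^x)=1$: your claim that $t^x$ must lie in $B_0$ is not justified. The problem is that $\beta_0$ itself occurs among the $\alpha_n$. Writing $a' = m_r + n_r$ for the minimum index with $\alpha_{a'} = \beta_0$, the orbit $\alpha_0, \alpha_1, \ldots$ under $\pi$ runs through all of $T(S_r)$ and then reaches $\alpha_{a'} = 1^q_j = \beta_0$ (since $\Pi(S^{(0)})$ is obtained from $\Pi(S)$ by post-composing with the swap of $0^r_i$ and $1^q_j$). If $a \geq a'$, then at step $a'$ of Construction~\ref{ConstAn} the element $\beta_0$ is shifted to the right in the ordering, so an element $t^x$ satisfying $\beta_0 < t^x$ in $T(S^{(0)})$ may well satisfy $t^x < \beta_0$ in $T(S^{(a)})$ and hence fail to lie in $B_0$. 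Your assertion that the $\alpha$-modifications ``only reposition elements $\alpha_n$ of segment $r$'' is false once $a \geq a'$, and your later weakening to ``segments $r$ and $q$'' does not help, since $\beta_0$ is precisely the element of $T(S_q)$ whose position you need to control.

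The paper handles this by an explicit case split. When $t^x \in B_0$, Proposition~\ref{PropOfrq}(ii) applies as you say. When $t^x \notin B_0$, one first observes that necessarily $a \geq a'$ (your argument is essentially the contrapositive of this, and is valid when $a < a'$), and then invokes the strict inequality $|B_0| < |B'_0| + |I|$ from the proof of Proposition~\ref{PropOfrq} to conclude $\ell(S') < \ell(S) - 1$ by a direct length count. You need this additional subcase to complete the proof.
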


\begin{proof}
For a small modification of $0^r$ and $1^q$,
by Lemma~\ref{LemOf01},
there exists an element $t^x$ of $T(S)$ such that
$0^r < t^x < 1^q$ and $r < x < q$.
If $\delta(t^x) = 0$, then the element $t^x$ belongs to $A_0$,
and the assertion follows from Proposition~\ref{PropOfrq}.
Let us see the case $\delta(t^x) = 1$.
If the set $B_0$ contains $t^x$,
then by Proposition~\ref{PropOfrq}
we complete the proof.
If $B_0$ does not contain $t^x$,
then we have $a \geq a'$,
where $a'$ is the minimum number satisfying that
$\alpha_{a'} = \beta_0$.
Then $|B_0| < |B_0'| + |I|$ holds,
where sets $B'_0$ and $I$ are same as the proof of 
Proposition~\ref{PropOfrq}.
Hence we see $\ell(S') < \ell(S) - 1$.
\end{proof}

Let $\xi = \sum_{i = 1}^z (m_i, n_i)$ be a Newton polygon.
Let $S$ be the ABS of the ${\rm DM_1}$ $N_\xi$.
Recall that the ABS $S$ is described as $S = \bigoplus_{i=1}^z S_i$
for ABS's $S_i$ corresponding to the ${\rm DM_1}$ $N_{m_i, n_i}$.
We say a full modification $S'$ is generic if $\ell(S') = \ell(S) - 1$.
Propositions~\ref{PropOfAVanish}~and~\ref{PropOfBVanish}
imply that
all generic specializations are given by full modifications.
Now let us show Theorem~\ref{ThmOfOneToOne}
which implies that to determine boundaries of $H(\xi)$
it is enough to deal with Newton polygons consisting of two segments.

\begin{proof}[Proof of Theorem~\ref{ThmOfOneToOne}]
Let $S$ be the ABS of $\xi$.
Let us construct a bijection map
$$\bigcup_{i=1}^{z-1}\{\text{generic specializations of }R_i\}
\longrightarrow 
\{\text{generic specializations of }S\},$$
where $R_i$ denotes the ABS of the two slopes Newton polygon 
$(m_i, n_i) + (m_{i+1}, n_{i+1})$.
By Corollary~\ref{CoroOfrr+1}, it suffices to show the claim:
{\it Let $r$ be a natural number with $r < z$.
The full modification of $S$ by $0^r_i$ and $1^{r+1}_j$ is generic
if and only if the full modification of $R_r = S_r \oplus S_{r+1}$ 
by the same $0^r_i$ and $1^{r+1}_j$ is generic.}
For the small modification by $0^r$ and $1^{r+1}$ of $S$,
we use the same notation as
Construction~\ref{ConstAn} and Construction~\ref{ConstBn}.
We give the proof of this theorem in 4 steps.

{\bf Step\,1:}
If the set $A_0$ contains an element $0^x$ with $x \neq r$,
then $x < r$ holds.
In fact, if $r < x$ is true, then
$0^x < \beta_0$, with $\beta_0 = 1^{r+1}$, in $T(S)$.
Thus we have $r + 1 < x$ and $0^x_{m_x+1} < 0^{r+1}_{m_r+1}$.
This contradicts with Proposition~\ref{PropOfTwoTerms}.
Similarly, if $1^y$ with $y \neq r + 1$ belongs to the set $B_0$,
we have then $r + 1 < y$.

{\bf Step\,2:}
Let $m$ be the minimum number such that $\alpha_m = 1^r_{m_r}$.
We claim that the set $A_m$ contains no element $t^x$ with $x \neq r$.
Suppose that the set $A_{m-1}$ contains an element $t^x$.
Since $1^x_{m_x} < 1^r_{m_r}$, we see $\delta(\pi(t^x)) = 0$,
and $\pi(t^x)$ does not belong to $A_m$.

{\bf Step\,3:}
Fix an element $0^x$ of $A_0$,
and let $n$ be the maximum number satisfying that 
$A_n$ contains $\pi^n(0^x)$.
Put $t = \pi^n(0^x)$.
If $\delta(\alpha_{n+1}) = 0$ and $\delta(\pi(t)) = 1$,
we have then $0^r_{m_r+1} < 0^x_{m_x+1}$ with $x < r$.
This is a contradiction.
Thus we see $\delta(\alpha_{n+1}) = 1$ and $\delta(\pi(t)) = 0$.

{\bf Step\,4:}
Put $R = S_r \oplus S_{r+1}$.
Consider the small modification of $R$ 
by the same elements $0^r$ and $1^{r+1}$ as above.
Let $C_0, C_1, \dots$ be sets obtained by the operation of
Construction~\ref{ConstAn},
and let $D_0, D_1, \dots$ be sets obtained by 
the operation of Construction~\ref{ConstBn}.
We have ABS's $R^{(0)}, R^{(1)}, \dots$ by these constructions.
Let $E$ (resp. $F$) denote the subset
of $A_0$ (resp. $B_0$) consisting of elements $0^x$ (resp, $1^y$) with $x \neq r$
(resp. $y \neq r + 1$).
Then we have $\ell(S) - \ell(S^{(0)}) = \ell(R) - \ell(R^{(0)}) + |E| + |F|$.
Moreover, by Step\,2 and Step\,3, 
we have $\ell(S') - \ell(S^{(0)}) = \ell(R') - \ell(R^{(0)}) + |E| + |F|$.
This completes the proof.
\end{proof}

\section{The case of Newton polygons consisting of two slopes} \label{TwoSlopesNP}

In this section, we give a proof of Theorem~\ref{ThmOfSpeNP}.
In Section~\ref{ArbitNP},
we have seen that it suffices to deal with Newton polygons 
consisting of two segments to classify boundary components of central streams.
The notation is as Section~\ref{Prelim} and Section~\ref{ArbitNP}.

For a Newton polygon $\xi$, we will give
a one-to-one correspondence between
$B(\xi)$ and $B(\xi^{\rm D})$.
See Section~\ref{Intro} \eqref{EqOfBxi} for the definition of $B(\xi)$.
Moreover, to get a bijection between $B(\xi)$ and $B(\xi^{\rm C})$,
we use Lemma~\ref{LemOfXiAndXiC}.


\begin{proof}[Proof of Theorem~\ref{ThmOfSpeNP}]
The assertion is paraphrased as follows:
Let $S$ (resp. $R$) denote the ABS associated to $\xi$ (resp. $\xi^C$).
The map from
a generic specialization $S'$ of $S$ obtained by the small modification 
by $0^1_i$ and $1^2_j$ to
the generic specialization $R'$ of $R$ obtained by the small modification 
by $0^1_i$ and $1^2_j$ is bijective.
The set $T(R)$ can be regarded as a subset of $T(S)$.
By Lemma~\ref{LemOfXiAndXiC}, we have
\begin{eqnarray}
\{(0^1, 1^2) \in T(S)^2 \mid 0^1 < 1^2 \text{ in }T(S)\}
= \{(0^1, 1^2) \in T(R)^2 \mid 0^1 < 1^2 \text{ in }T(R)\}.
\end{eqnarray}
Fix elements $0^1$ and $1^2$, with $0^1 < 1^2$, of $T(R) \subset T(S)$.
Consider the small modifications by these $0^1$ and $1^2$ for $S$ and $R$. 
Let $A_n$ (resp. $A_n'$) be sets obtained by Construction~\ref{ConstAn}
for the small modification by $0^1$ and $1^2$ in $S$ (resp. $R$).
Clearly we have $A_0 = A_0'$.
For a natural number $n$, suppose that $A_n = A'_n$.
If elements $t$ of $A_n$ satisfy that $\Delta(S)(t) = 0$,
then by Lemma~\ref{LemOfXiAndXiC} we see that $A_{n+1} = A'_{n+1}$. 
Moreover, 
if elements $t$ of $A_n$ satisfy $\Delta(S)(t) = 1$,
then it follows from Lemma~\ref{LemOfXiAndXiC} that 
$A_{n+2} = A'_{n+1}$ and $|A_{n+2}| = |A_{n+1}|$.
Similarly, for sets $B_n$ and $B'_n$ obtained by Construction~\ref{ConstBn},
we have $B_0 = B'_0$.
For a natural number $n$,
we suppose that $B_n = B'_n$.
Similarly as above, we have
\begin{eqnarray}
\begin{cases}
B_{n+1} = B'_{n+1} & \text{if } \Delta(S)(t) = 0 \text{ for } t \in B_n, \\
B_{n+2} = B'_{n+1} & \text{otherwise}.
\end{cases}
\end{eqnarray}
Moreover, for the latter case, we have $|B_{n+2}| = |B_{n+1}|$.
Thus we see that $\ell(S') = \ell(S) - 1$ if and only if $\ell(R') = \ell(R) - 1$.
This completes the proof.
\end{proof}

\begin{example} \label{ExOfSpe2532}
For the Newton polygon $\xi$ of Example~\ref{ExOfSpe2735},
we have $\Phi(\xi) = (2, 5) + (3, 2)$.
Let $S$ be the ABS associated to $\Phi(\xi)$.
For this ABS
\\
\vspace{6mm}
$$S = \xymatrix@=1pt
{
1_{ 1 }^ 1 \ar@/_30pt/[rrrrrrr] &
1_{ 2 }^ 1 \ar@/_30pt/[rrrrrrr] &
0_{ 3 }^ 1 \ar@/_30pt/[ll] &
0_{ 4 }^ 1 \ar@/_30pt/[ll] &
0_{ 5 }^ 1 \ar@/_30pt/[ll] &
1_{ 1 }^ 2 \ar@/_30pt/[rrrr] &
1_{ 2 }^ 2 \ar@/_30pt/[rrrr] &
0_{ 6 }^ 1 \ar@/_30pt/[llll] &
0_{ 7 }^ 1 \ar@/_30pt/[llll] &
1_{ 3 }^ 2 \ar@/_30pt/[rr] &
0_{ 4 }^ 2 \ar@/_30pt/[lllll] &
0_{ 5 }^ 2 \ar@/_30pt/[lllll]
},$$
\vspace{4mm}
\\
the specialization $S'$ obtained by exchanging $0^1_4$ and $1^2_2$ is 
\\
\vspace{6mm}
$$S' = \xymatrix@=1pt
{
1_{ 1 }^ 1 \ar@/_30pt/[rrrrrrr] &
0_{ 3 }^ 1 \ar@/_30pt/[l] &
1_{ 2 }^ 1 \ar@/_30pt/[rrrrrr] &
1_{ 2 }^ 2 \ar@/_30pt/[rrrrrr] &
0_{ 5 }^ 1 \ar@/_30pt/[lll] &
1_{ 1 }^ 2 \ar@/_30pt/[rrrrr] &
0_{ 4 }^ 1 \ar@/_30pt/[llll] &
0_{ 6 }^ 1 \ar@/_30pt/[llll] &
0_{ 7 }^ 1 \ar@/_30pt/[llll] &
0_{ 4 }^ 2 \ar@/_30pt/[llll] &
1_{ 3 }^ 2 \ar@/_30pt/[r] &
0_{ 5 }^ 2 \ar@/_30pt/[lllll]
}.$$
\vspace{4mm}
\\
For the small modification by $0^1_4$ and $1^2_2$,
we have sets
$A_0 = \{0^1_5\}$, $A_1 = \emptyset$,
$B_0 = \{1^2_1\}$ and $B_1 = \emptyset$.
Thus we have $a = 1$ and $b = 1$.
One can check that these $S$ and $S'$ satisfy $\ell(S') = \ell(S) - 1$.
Moreover, the sets of pairs $(0^1_i, 1^2_j)$ constructing generic specializations
for ABS's corresponding to $\xi$ and $\Phi(\xi)$ are both
$$\{(0^1_4, 1^2_1),\ (0^1_4, 1^2_2),\ (0^1_5, 1^2_1),\ (0^1_5, 1^2_2),\ 
(0^1_6, 1^2_3),\ (0^1_7, 1^2_3)\}.$$ 
\end{example}

\end{document}